\newcommand{\red}{\color{red}}
\newcommand{\gen}[1]{\ensuremath{\langle{#1}\rangle}}
\newcommand{\N}{\ensuremath{\mathbb{N}}}
\newcommand{\Z}{\ensuremath{\mathbb{Z}}}
\newcommand{\ds}{\displaystyle}
\newcommand{\id}{\ensuremath{\textnormal{Id}}}
\newcommand{\Id}{\textnormal{Id}}
\newcommand{\Idg}{\textnormal{Id}^{gr}}
\newcommand{\spn}{\textnormal{span}}
\newcommand{\F}{F\langle X\rangle}
\newcolumntype{C}[1]{>{\centering\let\newline\\\arraybackslash\hspace{0pt}}m{#1}}
\theoremstyle{definition} 
\newtheorem{teo}{Theorem}[section]
\newtheorem{cor}[teo]{Corollary}
\newtheorem{lema}[teo]{Lemma}
\newtheorem{pr}[teo]{Proposition}
\newtheorem{obs}[teo]{Remark}
\numberwithin{equation}{section}
\begin{document}
	\title{Central cocharacters of the subvarieties of varieties \\ of superalgebras with almost polynomial growth}

\author[W. D. S. Costa, J. P. Cruz, T.S. do Nascimento and A. C. Vieira]{W. D. S. Costa, J. P. Cruz$^{1}$,  T. S. do Nascimento and A. C. Vieira$^{1, 2, *}$}
	
\dedicatory{Departamento de Matemática, Instituto de Ciências Exatas, Universidade Federal de Minas Gerais. \\ Avenida Antonio Carlos 6627, 31123-970, Belo Horizonte, Brazil.\\
Departamento de Matemática Pura e Aplicada, Centro de Ciências Exatas e da Saúde, Universidade Federal do\\ Espírito Santo.
Alto Universitário, s/n, 29500-000, Alegre, Brazil\\
Departamento de Matemática, Instituto de Ciências Exatas e da Terra, Universidade Federal de Mato Grosso. \\ Avenida Fernando Correa da Costa 2367, 78060-900, Cuiabá, Brazil.}	
	
	\thanks{\footnotesize $^{1}$ Partially supported by FAPEMIG}
	\thanks{\footnotesize $^{2}$ Partially supported by CNPq}
	\thanks{\footnotesize {$^*$ Corresponding author}}
	\thanks{\footnotesize {\it E-mail addresses}:  willer.costa@ufes.com (Costa), juan.mat10@gmail.com (Cruz), thais.nascimento@ufmt.br (Nascimento),  anacris@ufmg.br (Vieira)}

	\subjclass[2020]{Primary 16R10, 16R50, Secondary 16W50, 20C30}
	
	\keywords{polynomial growth, central codimensions, graded algebras}
	
	\begin{abstract}  
In recent years, the study of the $T$-space of central polynomials of an algebra $A$ has become an object of great interest in the PI-theory. Such interest has been extended to the context of algebras with additional structures. The main goal of this paper is to present information about the central graded codimensions and the central graded cocharacters of the varieties of superalgebras  $\mathrm{var}^{gr}(\mathcal{G})$, $\mathrm{var}^{gr}(UT_2)$, 
$\mathrm{var}^{gr}(\mathcal{G}^{gr})$, $\mathrm{var}^{gr}(UT^{gr}_2)$ and $\mathrm{var}^{gr}(D^{gr})$, which are the only supervarieties with almost polynomial growth of the graded codimensions. Also we establish the generators of the space of central polynomials, determine the central codimensions and explicitly give the decomposition of the central graded cocharacters of each minimal subvariety of such supervarieties.
	\end{abstract}
 	\maketitle

\section{Introdution}

The sequences of codimensions and central codimensions of an algebra over a field of characteristic zero have been an important object of study in the last few years (see \cite{BrKoKrSi}, \cite{GZcentasso2}, \cite{GZcentasso}, \cite{GuiFidKo}, \cite{LaMaMarRi}, \cite{reggrow}). Recall that for a field of characteristic zero  $F$, we may consider the free associative algebra $\F$ over $F$ on a countable set $X = \{x_{1}, x_{2}, \ldots\}$. Given an algebra $A$ over $F$, we denote its center by $Z(A)$ and say that a polynomial $f(x_{1},\ldots,x_{n}) \in \F$ is a central polynomial of $A$ if it has zero constant term and $f(a_{1}, \ldots, a_{n}) \in Z(A)$, for all $a_{1}, \ldots, a_{n} \in A$. When  $f$ vanishes under all evaluations by elements in $A$,  $f$  is called an identity of $A$, otherwise $f$ is a proper central polynomial.

Regev in \cite{regexis,reggrow} introduced the concepts of codimension and central codimensions of an algebra $A$ as follows. Let $P_{n}$ the space of the multilinear polynomials in the variables $x_{1}, \ldots, x_{n}$ and consider the quotient spaces
$$P_{n}(A) = \displaystyle\frac{P_{n}}{P_{n} \cap \Id(A)} \ , \ P_{n}^{z}(A) = \displaystyle\frac{P_{n}}{P_{n} \cap C(A)} \ \ \mbox{and} \ \ \Delta_{n}(A) = \displaystyle\frac{P_{n} \cap C(A)}{P_{n} \cap \Id(A)},$$
where $C(A)$ is the space of the central polynomials and  $\Id(A)$ is the ideal of identities of $A$.

For $n \geq 1$, we define $n$th codimension, $n$th central codimension and the $n$th proper central codimension of $A$ by $c_{n}(A) = \dim P_{n}(A)$, $c_{n}^{z}(A) = \dim P_{n}^{z}(A)$ and $\delta_{n}(A) = \dim \Delta_{n}(A)$, respectively. 

Regev \cite{regexis} also proved that the sequence $\{c_n(A)\}_{n\geq 1}$ is exponentially bounded in case $A$ satisfies a non trivial identity.
On the other hand, the algebra $A$ has polynomial growth of the codimensions if there exist constants $a$, $b \geq 0$ such that $c_n(A) \leq an^b$, for all $n\geq 1$ and the first characterization of varieties of algebras with polynomial growth was presented by Kemer in \cite{kemer1978t} as follows.

Consider $\mathcal{G} = \langle 1, e_1, e_2, \ldots \mid e_ie_j = - e_je_i  \rangle$ be the infinite dimensional Grassmann algebra and let $UT_2(F)$ denote the algebra of $2 \times 2$ upper triangular matrices over $F$.  If  $\mathcal{V}= \textnormal{var}(A)$ is the variety of algebras generated by $A$ then the famous Kemer's theorem establishes that $c_n(\mathcal{V}), n = 1,2,\ldots$, is polynomially bounded if and only if 
$\mathcal{G}, UT_2 \notin \mathcal{V}$. Hence $\mathrm{var}(\mathcal{\mathcal{G}})$ and $\mathrm{var}(UT_2)$ are the only varieties of almost polynomial growth, 
i.e., the codimension sequences of those varieties grow exponentially but the codimension sequence of any proper subvariety of them is polynomially bounded.

 In contrast to $\mathcal{G}$, the algebra $UT_2$ does not contain proper central polynomials, and so $c_n^z(UT_2)= c_n(UT_2)$ for all $n\geq 1$. It is worth mentioning that in \cite{reggrow}, Regev determined the central codimension of $\mathcal{G}$, proving that $c_n(\mathcal{G})= 2c^z_n(\mathcal{G})$.  

The theory of varieties developed by Kemer (see \cite{Kem}) shows that the superalgebras and their graded 
identities play an important role in the development of the PI-theory. Given a  superalgebra $A$, we write $\mathcal{V} = \mathrm{var}^{gr}(A)$ for the variety of superalgebras generated by $A$ and we denote by $c^{gr}_n(A), n = 1,2,\ldots$, the sequence of graded codimensions of 
$A$. Recall that $c^{gr}_n(A)$ is the dimension of the space of multilinear polynomials 
in $n$ graded variables in the corresponding relatively free superalgebra of countable rank.

For the infinite dimensional Grassmann algebra,
we write $\mathcal{G}$ when it is a superalgebra  with the trivial grading and $\mathcal{G}^{gr}$ to denote $\mathcal{G}$ with the grading 
$(\mathcal{G}^{(0)}, \mathcal{G}^{(1)})$, where $\mathcal{G}^{(0)}$ is the span of all monomials in the $e_i$'s of even length and 
$\mathcal{G}^{(1)}$ is the span of all monomials of odd length.

Also, let $UT_2$ denote the algebra $UT_2$  as a superalgebra with trivial grading 
and let $UT^{gr}_2$ denote the algebra $UT_2$ with grading 
$
UT^{(0)}_2 = F e_{11} + F e_{22}$ and $UT^{(1)}_2 = F e_{12}.
$
Finally, let $D^{gr}$ be the commutative algebra  $F \oplus F$ with grading $(F(1,1),F(1,-1))$. 

In \cite{GiaMiZai} the authors showed that the above 
five superalgebras characterize the varieties of superalgebras of polynomial growth. In fact, they proved that if $\mathcal{V}$ is a variety of superalgebras then
$
c^{gr}_n(\mathcal{V}^{gr}) \leq k n^t$, for some constants $k, t$
if and only if $\mathcal{G}, \mathcal{G}^{gr}, UT_2, UT^{gr}_2, D^{gr} \notin \mathcal{V}^{gr}$. As a consequence $\mathrm{var}^{gr}(\mathcal{G}),$ $\mathrm{var}^{gr}(UT_2),$ 
$\mathrm{var}^{gr}(\mathcal{G}^{gr}), \mathrm{var}^{gr}(UT^{gr}_2)$ and $ \mathrm{var}^{gr}(D^{gr})$ are the only varieties of superalgebras 
of almost polynomial growth. 

In \cite{LaMat,LaMatalmostGgr} the author presented a complete list 
of superalgebras generating the proper subvarieties of  $\mathrm{var}^{gr}(\mathcal{G})$, $\mathrm{var}^{gr}(UT_2)$, 
$\mathrm{var}^{gr}(\mathcal{G}^{gr})$, $\mathrm{var}^{gr}(UT^{gr}_2)$ and $\mathrm{var}^{gr}(D^{gr})$.
Moreover, among them, they classify the minimal ones, which are proper subvarieties $\mathcal{U}$ such that
$
c^{gr}_n(\mathcal{U}) \approx q n^k \ \text{for some } k \geq 1, q > 0,
$
and for any proper subvariety $\mathcal{W} \subset \mathcal{U}$, 
we have 
$c^{gr}_n(\mathcal{W}) \approx q' n^t \ \text{with } t < k.$

We emphasize that the ideals of graded identities and the graded codimensions of all minimal subvarieties of the varieties of superalgebras with almost polynomial growth were given in  \cite{LaMat} and \cite{LaMatalmostGgr}. Furthermore, in \cite{NaSaVi}, the authors exhibit the decomposition of the sequences of graded cocharacters of all such subvarieties.

The concepts of central polynomials and central codimensions can be extended to the set of superalgebras by considering the corresponding objects.

The main goal of this paper is to present information about the space of the central graded polynomials,  the central graded codimensions and the central cocharacters of the superalgebras of almost polynomial growth and also, of their minimal subvarieties.

To this end, we determine the generators of the space of the central graded polynomials and calculate the central graded codimensions of each minimal subvariety of $\mathrm{var}^{gr}(\mathcal{G})$, $\mathrm{var}^{gr}(UT_2)$, 
$\mathrm{var}^{gr}(\mathcal{G}^{gr})$, $\mathrm{var}^{gr}(UT^{gr}_2)$ and $\mathrm{var}^{gr}(D^{gr})$. Moreover, we explicitly give the decomposition of the central graded cocharacters of such supervarieties.

\section{Generalities}

An algebra $A$ is a superalgebra (or a $\mathbb{Z}_2$-graded algebra) if it admits a decomposition $A=A_{0} \oplus A_{1}$, where $A_{0}$ and $A_{1}$ are subspaces of $A$ satsifying $A_{0}A_{0}+A_{1}A_{1}\subseteq A_{0}$ and $A_{0}A_{1}+A_{1}A_{0}\subseteq A_{1}$. The elements of $A_{0}$ and $A_{1}$ are called homogeneous of degree zero (even elements) and of degree one (odd elements), respectively. The pair $(A_{0}, A_{1})$ is called a grading of $A$. Clearly, any algebra $A$ can be viewed as a superalgebra with grading $(A, \{0\}),$ called the trivial grading of $A$. For this reason, the theory of graded identities generalizes the ordinary theory of polynomial identities.

 A subalgebra (ideal) $B \subseteq A$ is a graded subalgebra (ideal) if $B = (B \cap A_{0}) \oplus (B \cap A_{1})$. It is well known that the Jacobson radical $J(A)$ of an algebra $A$ is a graded ideal of $A$.
 
 The free associative algebra $\F$ has a natural superalgebra structure. Let $X = Y \cup Z$ be the disjoint union of two countable sets. Denote by $F_{0}$ the subspace of $F \langle Y \cup Z \rangle = F \langle Y,Z \rangle$ spanned by all monomials in the variables of $X$ having even degree in the variables of $Z$ and by $F_{1}$ the subspace spanned by all monomials of odd degree in $Z$, then $F \langle Y,Z \rangle = F_{0} \oplus F_{1}$ is a  $\Z_{2}$-graded algebra called the free superalgebra on $Y$ and $Z$ over $F$. 

 Given a superalgebra $A$, a $\Z_{2}$-polynomial $f (y_{1}, \ldots , y_{n}, z_{1}, \ldots, z_{m}) \in F \langle Y,Z \rangle$ is a graded identity of $A$ if
 $$f(a_{1}, \ldots ,a_{n}, b_{1}, \ldots , b_{m}) = 0 \ \mbox{for all} \ \ a_{1}, \ldots, a_{n} \in A_{0}, \ b_{1} \ldots , b_{m} \in A_{1}.$$
 The set of all graded identities of $A$ is denoted by $\Idg(A)$. This set is a $T_{2}$-ideal of $F \langle Y,Z \rangle$, that is, an ideal invariant under all endomorphisms of $F \langle Y,Z \rangle$ that preserve the grading. 
 
 A $\Z_{2}$-polynomial $f(y_{1},\ldots, y_{r},z_{1}, \ldots, z_{s}) \in F \langle Y,Z \rangle$ is a central graded polynomial of $A$ if it has zero constant term and its evaluations yield elements in the center: 
 $$f(a_{1}, \ldots, a_{r},b_{1}, \ldots, b_{s}) \in Z(A) \ \mbox{for all} \ a_{1}, \ldots , a_{s} \in A_{0}, \ b_{1}, \ldots, b_{s} \in A_{1}.$$  
 Clearly, if $f$ takes only the zero value, $f$ is a graded identiy of $A$. If there exist substitutions for which $f$ takes a non-zero value in $Z(A)$, then $f$ is called a proper central graded polynomials. The set of all central graded polynomial of $A$ is denoted by $C^{gr}(A)$. This set is a $T_{2}$-space of $F \langle Y,Z \rangle$, which means that it is a vector space invariant under all endomorphisms of the free algebra that preserve the grading.
 
 For a subset $S\subseteq F\langle Y,Z \rangle$, we denote $\langle S\rangle_{T_{2}}$  the $T_2$-ideal generated by $S$ and define the $T_{2}$-space generated by $S$, denoted by $\langle S\rangle^{T_{2}}$, as the space generated by all polynomials of the form $f(g_1,\dots, g_{2n})$, where $f(y_1,z_{1}, \dots,y_{n},z_{n})\in S$ and $g_1,\dots, g_{2n}\in F\langle Y,Z \rangle$.
 
 Furthermore, for a $T_{2}$-ideal $I\subseteq F\gen{Y,Z}$ and polynomials $f_1,\dots,f_k\in F\gen{Y,Z}$, the $T_{2}$-space generated by
 $\{f_1,\ldots,f_k \}\cup I$ will be denoted by $\langle  f_1,\dots,f_k,~I\, \rangle^{T_{2}} $.

\begin{obs}\label{rie} Let $A$ and $B$ be superalgebras. If $\Idg(A) = \Idg(B)$, then $C^{gr}(A) = C^{gr}(B)$.    
\end{obs}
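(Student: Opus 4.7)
The plan is to translate centrality into a statement about graded identities, which is automatically preserved under the assumption $\Idg(A)=\Idg(B)$. More precisely, I would use the characterization that a $\Z_2$-polynomial $f(y_1,\dots,y_r,z_1,\dots,z_s)$ with zero constant term is a central graded polynomial of a superalgebra $C$ if and only if the two commutators
\[
[f,y_{r+1}] \quad\text{and}\quad [f,z_{s+1}],
\]
with $y_{r+1}$ a fresh even variable and $z_{r+1}$ a fresh odd variable, both belong to $\Idg(C)$.

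The first step is to prove this characterization. The forward direction is immediate: if every graded evaluation of $f$ lies in $Z(C)=\{c\in C:cc'=c'c \text{ for all } c'\in C\}$, then in particular $f$ commutes with any even or odd evaluation, so the two commutators vanish identically. For the converse, suppose both commutators are in $\Idg(C)$. Fix any graded evaluation $\varphi\colon F\langle Y,Z\rangle\to C$ and write $c=\varphi(f)$. Then $cc_0=c_0c$ for all $c_0\in C_0$ and $cc_1=c_1c$ for all $c_1\in C_1$, because the commutators are graded identities and every element of $C_0$ (respectively $C_1$) occurs as a legitimate substitution of $y_{r+1}$ (respectively $z_{s+1}$). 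Since $C=C_0\oplus C_1$, this forces $c\in Z(C)$, and as the evaluation was arbitrary, $f\in C^{gr}(C)$.

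With this characterization in hand, the remark is essentially immediate. Given $f\in C^{gr}(A)$, we have $[f,y_{r+1}],[f,z_{s+1}]\in\Idg(A)=\Idg(B)$, and since $f$ already has zero constant term, the converse half of the characterization applied to $B$ yields $f\in C^{gr}(B)$. The symmetric argument gives the reverse inclusion, so $C^{gr}(A)=C^{gr}(B)$.

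The argument is entirely formal, and the only real content is setting up the commutator characterization of centrality in the $\Z_2$-graded setting. The one point to be careful about is that the substitutions for $y_{r+1}$ must range over $C_0$ and those for $z_{s+1}$ over $C_1$; this is why we need \emph{two} commutators rather than one, and why the decomposition $C=C_0\oplus C_1$ is essential. Beyond that, there is no obstacle: the statement is a direct consequence of the fact that both $\Idg(\,\cdot\,)$ and the condition defining centrality are closed under the same kind of substitutions, and they are linked by the elementary identity $Z(C)=\{c:[c,C_0]=[c,C_1]=0\}$.
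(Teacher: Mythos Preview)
Your proof is correct; the paper states this as a remark without proof, and the commutator characterization of centrality you set up is exactly the standard route. There is a minor typo (you write $z_{r+1}$ once where you mean $z_{s+1}$), but the argument is sound.
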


In contrast to $\Idg(A)$, the set $C^{gr}(A)$ is not necessarily a $T_{2}$-ideal but rather a $T_{2}$-space. A natural question is to determine conditions under which $C^{gr}(A)$ becomes a $T_{2}$-ideal. In the following result, we explore a condition on the center of a unitary superalgebra that ensures its $T_{2}$-space of the central graded polynomials is, in fact, a 
$T_{2}$-ideal.

\

\begin{pr}\label{prop2} Let $A$ be a unitary superalgebra such that $A \cong F \dotplus J(A)$ and $A_{0} \not\subset Z(A)$. Then $Z(A) \cong F \dot+ J'$, where $J' = J(A) \cap Z(A)$. Moreover, if $J'$ is a graded ideal of $A$, then $C^{gr}(A)$ is a $T_{2}$-ideal of $F \langle Y,Z \rangle$.
\end{pr}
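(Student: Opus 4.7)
The plan is to first establish the claimed decomposition of $Z(A)$ and then to reduce the $T_{2}$-ideal assertion to a statement about evaluations of central polynomials.

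First, to show $Z(A) = F \dotplus J'$: any $a \in Z(A)$ decomposes uniquely as $a = \alpha \cdot 1 + j$ with $\alpha \in F$ and $j \in J(A)$, using $A = F \dotplus J(A)$; then $j = a - \alpha \cdot 1$ is also central, so $j \in J(A) \cap Z(A) = J'$. Conversely, $F \cdot 1 \subseteq Z(A)$ and $J' \subseteq Z(A)$ by definition, and the sum is direct because $F \cdot 1 \cap J(A) = 0$.

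Next, since $C^{gr}(A)$ is already a $T_{2}$-space, to prove it is a $T_{2}$-ideal it suffices to verify that $wf, fw \in C^{gr}(A)$ for every $f \in C^{gr}(A)$ and every variable $w \in Y \cup Z$. The key step is the stronger claim that every graded evaluation of any $f \in C^{gr}(A)$ actually lies in $J'$, not merely in $Z(A)$. Granting this, for any graded evaluation $\phi$ one has $\phi(wf) = \phi(w)\phi(f) \in A \cdot J' \subseteq J' \subseteq Z(A)$, since by hypothesis $J'$ is a graded ideal of $A$; the argument for $fw$ is symmetric.

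To prove the key claim, I would first apply multilinearization (valid in characteristic zero, together with $C^{gr}(A)$ being a $T_{2}$-space) to reduce to multilinear $f$. Any multilinear polynomial admits a decomposition $f = \alpha y_1 \cdots y_n + g$, where $g$ is a linear combination of monomials each containing at least one Lie commutator or at least one $z$-variable. Since $A/J(A) \cong F$ is commutative, every commutator evaluates into $J(A)$; moreover, the decomposition $A = F \dotplus J(A)$ combined with $F \cdot 1 \subseteq A_{0}$ yields $A_{1} \subseteq J(A)$. Therefore $\phi(g) \in J(A)$, and writing $\phi(y_i) = \alpha_i \cdot 1 + j_i$ gives $\phi(f) = \alpha \, \alpha_1 \cdots \alpha_n \cdot 1 + j$ with $j \in J(A)$. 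Combined with $\phi(f) \in Z(A) = F \dotplus J'$, the uniqueness of the decomposition $A = F \dotplus J(A)$ forces $j \in J'$, reducing the claim to $\alpha = 0$.

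The main obstacle is forcing $\alpha = 0$. For this, invoke $A_{0} \not\subset Z(A)$: from $A_{0} = F \cdot 1 \oplus (J(A) \cap A_{0})$ one extracts an element $j_0 \in (J(A) \cap A_0) \setminus J'$. Specializing $y_1 \mapsto 1 + j_0$ and $y_i \mapsto 1$ for $i \geq 2$ kills every monomial of $g$: any commutator containing some $y_i$ with $i \geq 2$ vanishes because $[1, \cdot] = 0$, and the only alternative would require a commutator involving $y_1$ alone, which is impossible. Hence $\phi(f) = \alpha(1 + j_0) = \alpha \cdot 1 + \alpha j_0$, which lies in $F \dotplus J'$ only if $\alpha j_0 \in J'$, forcing $\alpha = 0$ because $j_0 \notin J'$. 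This closes the argument and the $T_{2}$-ideal property follows.
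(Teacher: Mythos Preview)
Your argument is essentially the paper's own proof: both establish $Z(A)=F\dotplus J'$ identically, both reduce the $T_2$-ideal claim to showing that every graded evaluation of a multilinear $f\in C^{gr}(A)$ lands in $J'$, both decompose $f=\alpha\, y_1\cdots y_n + g$ with $g$ built from commutators and $z$-monomials, and both kill $\alpha$ by a specialization sending all but one even variable to $1$. Your choice $y_1\mapsto 1+j_0$ with $j_0\in (J(A)\cap A_0)\setminus J'$ is just a particular instance of the paper's $y_1\mapsto a_0\in A_0\setminus Z(A)$, and the conclusion $\alpha=0$ follows the same way.

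There is one small omission to fix: in the specialization step you only assign values to the $y_i$ and argue that commutator terms vanish, but $g$ may also contain monomials with $z$-variables and \emph{no} commutator (e.g.\ $z_1y_2\cdots y_n$). You need to add $z_k\mapsto 0$ to the specialization (exactly as the paper does) so that those terms die as well; otherwise $\phi(f)=\alpha(1+j_0)$ is not justified. With that one-line addition the proof is complete and matches the paper's.
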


\begin{proof} Let $a \in Z(A)$. Since $A \cong F \dot+ J(A)$, we can write $a = \alpha + j$, with $\alpha \in F, \, j \in J(A)$. As $F \subset Z(A)$, it follows that, $j = a - \alpha \in Z(A)$ and hence $j \in J(A) \cap Z(A) = J'$. This establishes the decomposition $Z(A) \cong F \dot+ J'$.

 Now, suppose \( J' \) is a graded ideal of \( A \) and let \( f \in C^{gr}(A) \). To prove \( C^{gr}(A) \) is a \( T_2 \)-ideal, it suffices to show that for any \( x', x'' \in Y \cup Z \), the polynomial \( g = x' f x'' \) belongs to \( C^{gr}(A) \).

Write \( f \) in the form $f = \alpha y_{1} \cdots y_{n} + \text{cl}(w_{1},\ldots,w_{n})$ and take $a_{0} \in A_{0} \setminus Z(A)$ where $\text{cl}(w_{1},\ldots,w_{n})$, $1 \leq i \leq n$,  is a linear combination of elements of the form
$w_{i_{1}}\cdots w_{i_{t}}c_{j_{1}}\cdots c_{j_{s}},$ with
$w_{i} \in \{y_{1},z_{i}\}$ and $c_{j}$ a Lie commutator in the variables of $Y \cup Z$. Consider the evaluation $\phi$ defined by $y_{1} = a_{0}$, $y_{j} = 1$, for all $j \neq 1$, $z_{k} = 0$, for all $1 \leq k \leq n$. Under this evaluation, $\phi(f) = \alpha a_{0} \in Z(A)$ and since $a_{0} \notin Z(A)$, we have $\alpha = 0$. 

Thus, \( f \) is a linear combination of elements of the form \( w_{i_{1}} \cdots w_{i_{t}} c_{j_{1}} \cdots c_{j_{s}} \), where \( w_i \in \{y_i, z_i\} \) and each \( c_j \) is a Lie commutator.
This implies that for any evaluation $\psi$, we have
 $\psi(w_{i_{1}}w_{i_{2}} \cdots w_{i_{t}}c_{j_{1}}c_{j_{2}} \cdots c_{j_{s}}) \in J(A)$  and so, $\psi(f) \in J(A) \cap Z(A) = J'$. By hypothesis, $J'$ is a graded ideal of $A$, so $a  \psi(f)  b \in J'\subset Z(A)$, for any $a,b \in A_{0} \cup A_{1}$ and therefore we conclude that $g \in C^{gr}(A)$.

\end{proof}

Due to \cite{DrenGi}, it is well known that, if $1 \in A$, $\Idg(A)$ is a $T_{2}$-ideal completely determined by its multilinear proper $\Z_{2}$-polynomials. Recall that $f(y_{1},\ldots,y_{n}, z_{1}, \ldots, z_{n}) \in F\langle Y,Z \rangle$ is a proper $\Z_{2}$-polynomial if it is a linear combination of polynomials of the form $z_{1}\cdots z_{k}w_{1}\cdots w_{s}$, where $w_{i}$ are commutators of arbitrary weight in the variables $y_{1}, \ldots , y_{n},z_{1}, \ldots z_{n}$, for all $1 \leq i \leq s$. Therefore, we have the following result.

\begin{teo}\label{proprer} Let $A$ be a unitary superalgebra such that $A_{0} \not\subset Z(A)$. Consider $Z(A) \cong F \dot+ J'$, $J' = J(A) \cap Z(A)$. If $J'$ is a graded ideal of $A$, so $C^{gr}(A)$ is generated by proper $\Z_{2}$-polynomials.     
\end{teo}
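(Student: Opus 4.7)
The plan is to combine two ingredients. First, Proposition \ref{prop2} guarantees that, under our hypotheses, $C^{gr}(A)$ is already a $T_2$-ideal of $F\langle Y,Z\rangle$. Second, the Drensky-Giambruno theorem \cite{DrenGi}, stated for $\Idg(A)$ of a unitary superalgebra, actually applies to any $T_2$-ideal of $F\langle Y,Z\rangle$ in the unitary context: its proof uses only that the $T_2$-ideal is closed under the grading-preserving endomorphism substituting $1\in F$ for a $y$-variable. I aim to transfer this extraction from $\Idg(A)$ to the larger $T_2$-ideal $C^{gr}(A)$.

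Working in characteristic zero, multilinearization reduces the task to showing that each multilinear $f\in C^{gr}(A)\cap P_n^{gr}$ lies in the $T_2$-ideal generated by multilinear proper central polynomials. Fix such an $f$, with $r$ variables from $Y$ and $s$ from $Z$. The unitary decomposition of \cite{DrenGi} yields
\[
f \;\equiv\; \sum_{I\subseteq\{1,\ldots,r\}} y_{i_1}\cdots y_{i_k}\, p_I \pmod{\Idg(A)},
\]
where $I=\{i_1<\cdots<i_k\}$ and each $p_I$ is a multilinear proper $\Z_2$-polynomial in the remaining variables. The summand $p_\emptyset$ is already proper, so the task is to control the non-trivial $y$-prefixes arising for $I\neq\emptyset$.

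To do this, I would imitate the evaluation used in the proof of Proposition \ref{prop2}. For a maximal $I$ with nonzero coefficient, pick $i\in I$ and substitute $y_i=a_0$ for some $a_0\in A_0\setminus Z(A)$, $y_j=1$ for $j\notin I$, and independent homogeneous values for the other variables. Since $1$ is central and trivializes commutators involving those $y_j$, only the summands $y_J p_J$ with $J\supseteq I$ survive; maximality of $I$ isolates the contribution of $p_I$. Centrality of $f$ together with $a_0\notin Z(A)$ then forces the pure-$y$ coefficient of $p_I$ to vanish, while the remaining commutator-bearing part of $p_I$ is itself a proper polynomial whose evaluations automatically land in $Z(A)$, giving $p_I\in C^{gr}(A)$. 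Iterating by downward induction on $|I|$ strips off every non-proper $y$-prefix, and since $\Idg(A)\subseteq C^{gr}(A)$ is itself generated as a $T_2$-ideal by multilinear proper polynomials by \cite{DrenGi}, it follows that $C^{gr}(A)$ is generated by proper $\Z_2$-polynomials. The main technical obstacle I foresee lies in making the extraction clean: one must verify that each substitution really isolates a single $p_I$ modulo $\Idg(A)$ rather than mixing contributions from the $p_J$ of smaller degree, which will require careful bookkeeping of how commutators of different variables interact with the specialization $y_j=1$.
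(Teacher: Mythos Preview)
Your first two paragraphs are exactly the paper's proof. The paper offers nothing beyond ``Therefore, we have the following result'' after stating Proposition~\ref{prop2} and citing \cite{DrenGi}; the implicit argument is precisely what you wrote: $C^{gr}(A)$ is a $T_2$-ideal by Proposition~\ref{prop2}, and the Drensky--Giambruno reduction to proper polynomials applies to any $T_2$-ideal since it uses only closure under the grading-preserving endomorphisms $y_j\mapsto 1$.

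Your subsequent hands-on extraction is therefore unnecessary, and it also contains a slip. With $y_j=1$ for $j\notin I$, a proper $p_J$ (which carries every $y_j$ with $j\notin J$ inside commutators) vanishes as soon as some $j\in J^c\cap I^c$ exists; hence the surviving summands are those with $J\supseteq I^c$, not $J\supseteq I$. To isolate a single term by maximality of $I$ you should instead set $y_j=1$ for $j\in I$, after which the surviving $J$ are exactly those with $J\supseteq I$. More to the point, once $C^{gr}(A)$ is a $T_2$-ideal, the extraction of each $p_I$ is the standard inductive/M\"obius computation inside the free superalgebra, and no evaluation in $A$---in particular no choice of $a_0\notin Z(A)$---is needed at this stage; that device was already fully used in the proof of Proposition~\ref{prop2}. (Note also that each $p_I$ is proper by construction, so there is no ``pure-$y$ coefficient of $p_I$'' to kill; what you actually need is $p_I\in C^{gr}(A)$, which is exactly what the $T_2$-ideal closure delivers.)
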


 It is well known that in characteristic zero, every graded identity is equivalent to a system of multilinear graded identities. The same holds for central graded polynomials. For $n\ge 1,$ we denote by

$$P_{n}^{gr} = \spn \{w_{\sigma(1)} \cdots w_{\sigma(n)}) \ | \ \sigma \in S_{n} , w_{i} = y_{i} \ \mbox{or} \ w_{i} = z_{i}, i = 1, \ldots ,n\}$$
the space of multilinear polynomials of degree $n$ in the variables $y_{1}, z_{1}, \ldots , y_{n}, z_{n}$, and we define
$$
 P_{n}^{gr}(A)=\displaystyle\frac{P_{n}^{gr}}{P_{n}^{gr} \cap \Idg(A)}, \ \ 
P_{n}^{gr,z}(A)= \displaystyle\frac{P_{n}^{gr}}{P_{n}^{gr} \cap C^{gr}(A)} \ \ \mbox{and} \ 
\Delta_n^{gr}(A)= \displaystyle\frac{P_n^{gr}\cap C^{gr}(A)}{P_{n}^{gr} \cap \Idg(A)}.
$$

The non-negative integers 
$$c_{n}^{gr}(A) = \dim P_{n}^{gr}(A), \ c_{n}^{gr,z}(A) = \dim P_{n}^{gr,z}(A) \ \mbox{and} \ \delta_{n}^{gr}(A) = \dim \Delta_{n}^{gr}(A)$$ 
are called the $n$th graded codimension,  the $n$th central graded codimension and the $n$th proper central graded codimension of $A$, respectively. Similarly to the ordinary case, we also have
\begin{equation}\label{eqcodim}
c_{n}^{gr}(A) = c_{n}^{gr,z}(A) + \delta_{n}^{gr}(A).
\end{equation}

Other important numerical invariants in PI-theory are the so-called PI-exponent, central exponent  and proper central exponent of $A$, defined respectively as follows:
$$\exp(A):=\lim_{n\to \infty}\sqrt[n]{c_n(A)}, \ \displaystyle \exp^z(A):=\lim_{n\to \infty}\sqrt[n]{c^z_n(A)}
\;\;\mbox{ and \;}\exp^\delta(A):=\lim_{n\to \infty}\sqrt[n]{\delta_n(A)}.$$

The existence and integrality of these invariants were proven by Giambruno and Zaicev \cite{GZ98,GZ99,GZcentasso2,GZcentasso}. In \cite{GiLaMi}, Giambruno, La Mattina and Polcino Milies classified the 
 varieties of algebras having almost polynomial $\delta$-growth. We recall that such a variety $\mathcal{V}$  satisfies $\exp^{\delta}(\mathcal{V})\geq 2$ and for any proper subvariety $\mathcal{W}$ of  we have that
 $\exp^{\delta}(\mathcal{W})\leq 1$. Considering the subalgebra $D=F(e_{11}+e_{33})\oplus Fe_{12}\oplus Fe_{13} \oplus Fe_{22}\oplus Fe_{23}$ of the algebra of $UT_3$ and the subalgebra $D_0= Fe_{12}\oplus Fe_{13} \oplus Fe_{14} \oplus Fe_{22}\oplus Fe_{23} \oplus  Fe_{24} \oplus Fe_{33}\oplus Fe_{34}$ of $UT_4$, the authors proved that the only varieties of almost polynomial $\delta$-growth are $\mathcal{G}$, $D$ and $D_0$. They also presented a similar result in the context of algebras graded by a finite abelian group $G$. In particular, for $G=\Z_2$, the results presented here contribute information about the subvarieties of varieties of superalgebras with almost polynomial $\delta$-growth.

  We observe that the hyperoctahedral group $\mathbb{Z}_2 \wr S_n$ defines a natural action on $P_n^{gr}$ as follows: for $k = (a_1, \ldots, a_n; \sigma) \in \mathbb{Z}_2 \wr S_n$, let
\[
k y_i = y_{\sigma(i)} \quad \text{and} \quad k z_i = 
\begin{cases}
z_{\sigma(i)} & \text{if } a_{\sigma(i)} = 1, \\
-z_{\sigma(i)} & \text{if } a_{\sigma(i)} = -1.
\end{cases}
\]
 Then $P_n^{gr}$ becomes a $\mathbb{Z}_2 \wr S_n$-module. Since $P_n^{gr} \cap \operatorname{Id}^{gr}(A)$ is invariant under this action, the quotient space $P_n^{gr}(A)$ inherits the structure of a $\mathbb{Z}_2 \wr S_n$-module, and we may consider its $\mathbb{Z}_2 \wr S_n$-character $\chi_n^{gr}(A)$, called the $n$th graded cocharacter of $A$. The spaces $P_n^{gr,z}(A)$ and $\Delta_n^{gr}(A)$ also carry a $\mathbb{Z}_2 \wr S_n$-module structure, and we denote their characters by $\chi_n^{gr,z}(A)$ and $\chi_{n}(\Delta^{gr}(A))$, called the $n$th central graded cocharacter and the $n$th graded proper central cocharacter of $A$, respectively.

Recall that the irreducible $\mathbb{Z}_2 \wr S_n$-characters are in one-to-one correspondence with pairs of partitions $(\lambda, \mu)$, where $\lambda \vdash n - r$ and $\mu \vdash r$ for $r = 0, 1, \ldots, n$. By complete reducibility, we have the decompositions:

\[
\chi_n^{gr}(A) = \sum_{|\lambda| + |\mu| = n} m_{\lambda,\mu} \chi_{\lambda,\mu}, \quad
\chi_n^{gr,z}(A) = \sum_{|\lambda| + |\mu| = n} m'_{\lambda,\mu} \chi_{\lambda,\mu}, \quad
  \chi_{n}(\Delta^{gr}(A)) = \sum_{|\lambda| + |\mu| = n} m''_{\lambda,\mu} \chi_{\lambda,\mu},
\]
where $\chi_{\lambda,\mu}$ is the irreducible $\mathbb{Z}_2 \wr S_n$-character associated with the pair $(\lambda, \mu) \vdash n$. The multiplicities satisfy $m_{\lambda,\mu} = m'_{\lambda,\mu} + m''_{\lambda,\mu}$, and we write
\begin{equation}\label{eqcocar}
\chi_n^{gr}(A) = \chi_n^{gr,z}(A) + \chi_{n}(\Delta^{gr}(A)).
\end{equation}

  The degree of the irreducible character $\chi_{\lambda,\mu}$ is denoted by $d_{\lambda,\mu}$ and is given by
\[
d_{\lambda,\mu} = \binom{n}{r} d_\lambda d_\mu,
\]
where $d_\lambda$ and $d_\mu$ are the degrees of the irreducible $S_{n-r}$- and $S_r$-characters $\chi_\lambda$ and $\chi_\mu$, respectively, given by the hook formula (see \cite{hook}). Since $c_n^{gr}(A) = \chi_n^{gr}(A)(1)$ and $c_n^{gr,z}(A) = \chi_n^{gr,z}(A)(1)$ for all $n \geq 1$, it follows from the above decompositions that
\[
c_n^{gr}(A) = \sum_{|\lambda| + |\mu| = n} m_{\lambda,\mu} d_{\lambda,\mu}, \quad \text{and} \quad
c_n^{gr,z}(A) = \sum_{|\lambda| + |\mu| = n} m'_{\lambda,\mu} d_{\lambda,\mu}.
\]

 Another approach to determining the cocharacter decomposition is to consider the action of $S_{n-r} \times S_r$. For $0 \leq r \leq n$, let $P_{n-r,r}^{gr}$ denote the space of multilinear polynomials in the variables $y_1, \ldots, y_{n-r}, z_{n-r+1}, \ldots, z_n$. To study $P_n^{gr} \cap \operatorname{Id}^{gr}(A)$, it suffices to study $P_{n-r,r}^{gr} \cap \operatorname{Id}^{gr}(A)$ for all $r \geq 0$, which can be done via the representation theory of $S_{n-r} \times S_r$. Define
\[
P_{n-r,r}^{gr}(A) = \frac{P_{n-r,r}^{gr}}{P_{n-r,r}^{gr} \cap \operatorname{Id}^{gr}(A)}, \quad
P_{n-r,r}^{gr,z}(A) = \frac{P_{n-r,r}^{gr}}{P_{n-r,r}^{gr} \cap C^{gr}(A)},
\]
and let
\[
c_{n-r,r}^{gr}(A) = \dim P_{n-r,r}^{gr}(A), \quad
c_{n-r,r}^{gr,z}(A) = \dim P_{n-r,r}^{gr,z}(A).
\]
Then we have
\begin{equation}\label{codimenr}
c_n^{gr}(A) = \sum_{r=0}^n \binom{n}{r} c_{n-r,r}^{gr}(A), \quad \text{and} \quad
c_n^{gr,z}(A) = \sum_{r=0}^n \binom{n}{r} c_{n-r,r}^{gr,z}(A).
\end{equation}

Moreover, we may consider the permutation action of $S_{n-r}$ on the variables $y_1, \ldots, y_{n-r}$ and of $S_r$ on the variables $z_{n-r+1}, \ldots, z_n$. These induce a (left) action of $S_{n-r} \times S_r$ on $P_{n-r,r}^{gr}$. Since $T_2$-ideals (and $T_2$-spaces) are invariant under permutations of symmetric (respectively skew) variables, it follows that $P_{n-r,r}^{gr}(A)$ and $P_{n-r,r}^{gr,z}(A)$ inherit the structure of left $S_{n-r} \times S_r$-modules. We denote their characters by $\chi_{n-r,r}^{gr}(A)$ and $\chi_{n-r,r}^{gr,z}(A)$, respectively.

\section{The varieties $\textnormal{var}^{gr}(D^{gr}), \ \textnormal{var}^{gr}(UT_{2})\ \ \mbox{and} \ \ \textnormal{var}^{gr}(UT_{2}^{gr})$}

We are interested in studying the behavior of the central polynomials of superalgebras that generate minimal subvarieties of the almost polynomial growth supervarieties. We investigate the central codimensions and the central cocharacter of these superalgebras, comparing the  obtained results. This section presents the main results about the %
varieties $ \text{var}^{gr}(D^{gr}), \text{var}^{gr}(UT_2) \ \mbox{and} \  \text{var}^{gr}(UT_2^{gr})$ as well as the minimal subvarieties contained therein.

Firstly, we consider the superalgebra $D^{gr}$. We already know that its $T_2$-ideal is $\langle [y_{1},y_{2}],[z_{1},z_{2}],[y_1,z_2]\rangle_{T_{2}}$ (see \cite{GiMi}). Since $D^{gr}$ is a commutative superalgebra, we conclude that its $T_2$-space is $F\langle Y,Z \rangle$, except for non-zero constant polynomials. As a consequence, we have $c_{n}^{gr,z}(D^{gr}) = 0$, $\delta_{n}^{gr,z}(D^{gr}) = c_{n}^{gr}(D^{gr}) = 2^{n}$, $\chi_{n}^{gr,z}(D^{gr}) = 0$ and $\chi_{n}(\Delta^{gr}(D^{gr})) = \chi_{n}^{gr}(D^{gr}) = \ds\sum_{i=0}^{n} \chi_{(n-i),(i)}$ (see \cite{GiMi}, \cite{GiaMiZai}).  

To describe the minimal varieties in $\text{var}^{gr}(D^{gr})$
classified in \cite{LaMatalmostGgr}, for  $k \geq 2$, let $I_{k}$ denote the $k \times k$ identity matrix and $E = \displaystyle\sum_{i=1}^{k-1} e_{i,i+1} \in M_{k}(F)$. Consider $C_k^{gr}$ to be the subalgebra of $UT_{k}$
$$C_k = \spn\{I_{k},E, \ldots, E^{k-1}\}$$
with elementary grading induced by $\textbf{g}=(0,1,0,1,...) \in \mathbb{Z}_2^{k}.$

By \cite[Corollary 8.2]{LaMatalmostGgr},  a superalgebra $A$ generates a minimal variety in $\text{var}^{gr}(D^{gr})$ if and only if $\Idg(A) = \Idg(C_{k}^{gr})$ for some  $k \geq 2.$ Notice that $C_{k}$ is a commutative subalgebra of $UT_{k}$. Hence, every polynomial in $F \langle Y,Z \rangle$ (except non-zero constant polynomials) is a central polynomial of $C_{k}^{gr}$. Therefore, we immediately conclude that $c_{n}^{gr,z}(C_{k}) = 0$, $\delta_{n}^{gr}(C_{k}) = c_{n}^{gr}(C_{k}^{gr}) = \displaystyle\sum_{i=0}^{k-1} \binom{n}{i}$, $\chi_{n}^{gr,z}(C_{k}^{gr}) = 0$ and $\chi_{n}(\Delta^{gr}(C_{k}^{gr})) = \displaystyle\sum_{i=0}^{k-1} \chi_{(n-i),(i)}$ (see \cite{LaMaMi}, \cite{NaSaVi}).

Now, we shall carry out a detailed study of the behavior of the central polynomials of the minimal subvarieties contained in the variety $\text{var}^{gr}(UT_2)$. 
Note that $UT_{2}$ has no proper central graded polynomials, so we conclude that $C^{gr}(UT_{2}) = \Idg(UT_{2}) = \langle [y_{1},y_{2}][y_{3},y_{4}],z \rangle_{T_{2}}$, $c_{n}^{gr,z}(UT_{2}) = c_{n}^{gr}(UT_{2}) = 2^{n-1}(n-2)+2 $, $\delta_n^{gr}(UT_{2}) = 0$, $\chi_{n}^{gr,z}(UT_{2}) = \chi_{n}^{gr}(UT_{2}) = \chi_{(n), \varnothing} + \displaystyle\sum_{i\ge 1} (n-2i+1) \chi_{(n-i, i),\varnothing} + \displaystyle\sum_{i\ge 1} (n-2i) \chi_{(n-i-1, i, 1),\varnothing}$ and $\chi_{n}(\Delta^{gr}(UT_{2}))  = 0$ (see \cite{BeGiSvi}, \cite{MiReZai}).

In order to present the classification of the minimal subvarieties of $\textnormal{var}^{gr}(UT_{2})$, we denote by $A_{k},B_{k}, N_{k}$ the following subalgebras of $UT_{k}$, $k \geq 2$:

$$A_{k} = \spn\{e_{11}, E, E^{2}, \ldots, E^{k-2},e_{12},e_{13},\ldots,e_{1k}\},$$
$$B_{k} = \spn\{e_{kk},E, E^{2}, \ldots, E^{k-2},e_{1k},e_{2k},\ldots,e_{k-1,k}\}, $$
$$ N_{k}=\spn\{I_k,E, E^{2}, \ldots , E^{k-2},e_{12},e_{13},...,e_{1k}\}$$
where $E = \ds\sum_{i=1}^{k-1} e_{i,i+1} \in UT_{k}$ and $I_k$ denotes the $k\times k$ identity matrix. 

Consider $A_{k}, B_{k}$ and $N_{k}$ with the trivial grading. By \cite[Corollary 5.4]{LaMat},  a superalgebra $A$ generates a minimal variety in $\text{var}^{gr}(UT_{2})$ if and only if $\Idg(A) = \Idg(A_{k})$ or $\Idg(A) = \Idg(B_{k})$ or $\Idg(A) = \Idg(N_{t})$, for some  $k \geq 2, t > 2$. 

For the superalgebras $A_{k}$ and  $B_{k}$, the $T_2$-ideal, the codimensions, and the cocharacter decomposition are already known (see \cite{GutReg}, \cite{LaMat}, \cite{NaSaVi}). Since the superalgebras $A_{k}$ and  $B_{k}$ have trivial center, we can conclude the following.

\begin{teo} For $k \geq 2$, we have

\begin{enumerate}
    \item $C^{gr}(A_k) = \Idg(A_{k}) = \langle [y_{1},y_{2}][y_{3},y_{4}], [y_{1},y_{2}] y_{3} y_{4} \cdots y_{k+1},z\rangle_{T_{2}}$;
    \item $C^{gr}(B_k)=   \Idg(B_{k}) = \langle [y_{1},y_{2}][y_{3},y_{4}], y_{3} y_{4} \cdots y_{k+1}[y_{1},y_{2}] ,z\rangle_{T_{2}}$;
    \item $c_{n}^{gr,z}(A) = c_{n}^{gr}(A)  =1+\displaystyle\sum_{i=0}^{k-2} \binom{n}{i} (n-i-1)$, for $A \in \{A_k, B_k\}$;
    \item $\chi_{n}^{gr,z}(A) = \chi_{n}^{gr}(A) = \chi_{(n), \varnothing} + \ds\sum_{i=1}^{k-1}(k- i) \chi_{(n-i,i),\varnothing} + \ds\sum_{i=1}^{k-2} (k-i-1) \chi_{(n-i-1,i,1),\varnothing}$, for $A \in \{A_k, B_k\}$;
    \item $\delta_n^{gr}(A)= 0$ and $\chi_{n}(\Delta^{gr}(A)) = 0$, for $A \in \{A_k, B_k\}$.
    
\end{enumerate} 
    
\end{teo}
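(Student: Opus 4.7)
The entire statement collapses onto one observation: the superalgebras $A_k$ and $B_k$ have trivial center. I would verify this first by a short direct computation. Writing a general element of $A_k$ as $a = \alpha e_{11} + \sum_{i=1}^{k-2} \beta_i E^i + \sum_{j=2}^{k} \gamma_j e_{1j}$, the conditions $[a, e_{11}] = 0$ and $[a, E] = 0$ are easily seen to force $\alpha = 0$, $\beta_i = 0$ for every $i$, and $\gamma_j = 0$ for every $j$, so $Z(A_k) = \{0\}$. The argument for $B_k$ is strictly dual, using $e_{kk}$ and $E$ in place of $e_{11}$ and $E$.

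With trivial center in hand, $C^{gr}(A_k) = \Idg(A_k)$ follows at once: any evaluation of a central graded polynomial lies in $Z(A_k) = \{0\}$ and hence vanishes identically, while the reverse inclusion is automatic. The same holds for $B_k$. Since the $\Z_2$-grading on both algebras is trivial, the odd component is zero, so the monomial $z$ belongs to both $\Idg(A_k)$ and $\Idg(B_k)$; combining this with the known ordinary $T$-ideal generators of $A_k$ and $B_k$ from \cite{GutReg, LaMat} yields the $T_2$-ideal descriptions in items (1) and (2).

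For items (3) and (4), the graded codimensions and graded cocharacters of $A_k$ and $B_k$ have already been computed in \cite{LaMat} and \cite{NaSaVi}. Because $C^{gr}(A) = \Idg(A)$, the quotient spaces $P_n^{gr}/(P_n^{gr} \cap C^{gr}(A))$ and $P_n^{gr}/(P_n^{gr} \cap \Idg(A))$ coincide, so $c_n^{gr,z}(A) = c_n^{gr}(A)$ and $\chi_n^{gr,z}(A) = \chi_n^{gr}(A)$ for $A \in \{A_k, B_k\}$. Item (5) is immediate from the same identification: the quotient $\Delta_n^{gr}(A) = (P_n^{gr} \cap C^{gr}(A))/(P_n^{gr} \cap \Idg(A))$ is zero, hence $\delta_n^{gr}(A) = 0$ and $\chi_n(\Delta^{gr}(A)) = 0$.

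The only genuine obstacle is the trivial-center verification, which is elementary but must not be skipped; everything else reduces to a formal comparison of quotient spaces together with citations to the existing literature. A slightly more structural route to the center computation is available through the Wedderburn--Malcev decomposition of $A_k$, whose semisimple part is $Fe_{11}$ and whose radical is spanned by $E, E^2, \ldots, E^{k-2}, e_{12}, \ldots, e_{1k}$; from this perspective any central element must be a scalar multiple of $e_{11}$ plus a radical element, and both pieces are killed by commuting with $E$.
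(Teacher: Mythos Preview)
Your proposal is correct and follows exactly the paper's own approach: the paper states the theorem after observing that $A_k$ and $B_k$ have trivial center and citing \cite{GutReg}, \cite{LaMat}, \cite{NaSaVi} for the identities, codimensions and cocharacters, without giving any further argument. You actually supply more detail than the paper by sketching the center computation explicitly.
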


Now, for the superalgebra $N_{k}$, $k \geq 2$. Firstly, notice that $N_{2} = C_{2}$ and so this case was solved. For $k \geq 3$, we have the following.

\begin{teo} (\cite[Theorem 3.4]{GLMPetro} and \cite[Theorem 5.5]{NaSaVi}) Let $k \geq 3$. Then:

\begin{enumerate}
    \item $\Idg(N_{k}) = \langle [y_{1},\ldots,y_{k}],[y_{1},y_{2}][y_{3},y_{4}],z\rangle_{T_{2}}$;
    \item $c_{n}^{gr}(N_{k}) = 1+ \displaystyle\sum_{i=2}^{k-1} \binom{n}{i} (i-1)$;
    \item $\chi_{n}^{gr}(N_{k}) = \chi_{(n),\varnothing} + \ds\sum_{i=1}^{k-2}(k-i-1)(\chi_{(n-i,i),\varnothing} + \chi_{(n-i-1,i,1),\varnothing})$.
\end{enumerate}
    
\end{teo}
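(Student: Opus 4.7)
The plan is to establish the three claims together, using the key simplification that the trivial grading on $N_k$ places $z$ in $\Idg(N_k)$. Consequently $P_n^{gr}(N_k)$ reduces to the ordinary multilinear space $P_n(N_k)$, so $\cgr(N_k)=c_n(N_k)$ and the cocharacter is concentrated on pairs $(\lambda,\varnothing)$ with $|\lambda|=n$. Thus everything reduces to determining the ordinary $T$-ideal of $N_k$, computing $c_n(N_k)$, and decomposing the resulting $S_n$-module.

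First I would check directly that $[y_1,\ldots,y_k]$ and $[y_1,y_2][y_3,y_4]$ lie in $\mathrm{Id}(N_k)$. A short calculation on the spanning set $\{I_k,E,\ldots,E^{k-2},e_{12},\ldots,e_{1k}\}$ shows that $I_k$ is central, the powers $E^i$ commute among themselves, the first-row elements $e_{1j}$ annihilate each other, and $[E^i,e_{1j}]=-e_{1,i+j}$ (zero whenever $i+j>k$). Hence $[N_k,N_k]\subseteq \spn\{e_{1j}:3\le j\le k\}$, which immediately forces $[y_1,y_2][y_3,y_4]\equiv 0$; an induction on $m$ gives $[y_1,\ldots,y_m](N_k)\subseteq \spn\{e_{1j}:j\ge m+1\}$, vanishing at $m=k$.

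The main step, and the main obstacle, is the reverse inclusion $\Idg(N_k)\subseteq \langle [y_1,\ldots,y_k],[y_1,y_2][y_3,y_4],z\rangle_{T_2}$. Since $I_k\in N_k$, Drensky's proper-polynomial reduction applies: modulo the candidate ideal, every multilinear polynomial in $P_n$ is a linear combination of the pure product $y_1\cdots y_n$ and of terms $y_{i_1}\cdots y_{i_{n-r}}\,[y_{j_1},y_{j_2},\ldots,y_{j_r}]$ with $i_1<\cdots<i_{n-r}$, $2\le r\le k-1$, and Drensky's standard left-normed form $j_1>j_2\le j_3\le\cdots\le j_r$ on the complementary indices. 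The identity $[y_1,y_2][y_3,y_4]$ excludes products of two commutators, while $[y_1,\ldots,y_k]$ cuts off the commutator length at $k-1$. Counting the $r-1$ admissible commutators per $r$-subset (choose $j_1$ as any element strictly larger than the forced minimum $j_2$) yields the upper bound $c_n(N_k)\le 1+\sum_{r=2}^{k-1}\binom{n}{r}(r-1)$.

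For the matching lower bound and the cocharacter, I would exhibit for each canonical polynomial an explicit evaluation in $N_k$, typically placing $I_k$ on the slots $y_{i_\bullet}$, the element $e_{12}$ on the slot $y_{j_2}$, and suitable powers $E^a$ on the slots $y_{j_1},\ldots,y_{j_r}$, producing a specific element of $\spn\{e_{1,r+1},\ldots,e_{1k}\}$. Linear independence of these evaluations delivers the codimension equality and simultaneously identifies the $S_n$-isotypic components: the pure monomial realizes $\chi_{(n),\varnothing}$, while the $(r-1)\binom{n}{r}$ canonical length-$r$ commutators yield $(k-r-1)$ copies of $\chi_{(n-r,r),\varnothing}\oplus\chi_{(n-r-1,r,1),\varnothing}$, the two partition shapes in the multilinear Lie component whose second row has length $r$, with the multiplicity $k-r-1$ arising from the number of independent targets $e_{1,r+1},\ldots,e_{1k}$ surviving the Engel identity of length $k$.
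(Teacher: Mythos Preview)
This theorem is not proved in the paper itself; it is quoted verbatim from \cite{GLMPetro} (for items (1) and (2)) and \cite{NaSaVi} (for item (3)), with no argument supplied. There is therefore no ``paper's own proof'' to compare against, and I will simply assess the soundness of your sketch.

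Your treatment of items (1) and (2) is essentially correct. The reduction to the ungraded situation via $z\in\Idg(N_k)$ is right, the verification that $[y_1,\ldots,y_k]$ and $[y_1,y_2][y_3,y_4]$ are identities follows from the commutator computations you describe, and Drensky's proper-polynomial basis modulo the candidate ideal gives exactly the spanning set you list, with the stated cardinality. The evaluation scheme for linear independence (identity on the outer variables, $e_{12}$ on one commutator slot, powers of $E$ on the rest) is the standard one and works, though it requires slightly more bookkeeping than you indicate.

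Your argument for item (3), however, breaks down. You assert that for each commutator length $r\in\{2,\ldots,k-1\}$ the corresponding stratum contributes $(k-r-1)$ copies of $\chi_{(n-r,r),\varnothing}\oplus\chi_{(n-r-1,r,1),\varnothing}$. Summing this over $r$ gives
\[
\chi_{(n),\varnothing}+\sum_{i=2}^{k-2}(k-i-1)\bigl(\chi_{(n-i,i),\varnothing}+\chi_{(n-i-1,i,1),\varnothing}\bigr),
\]
which omits the $i=1$ summand $(k-2)\bigl(\chi_{(n-1,1),\varnothing}+\chi_{(n-2,1,1),\varnothing}\bigr)$ entirely and hence does not match the stated cocharacter. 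The claim also fails dimensionally: for $k=4$, $n=4$, the length-$2$ stratum has dimension $\binom{4}{2}=6$, whereas a single copy of $\chi_{(2,2)}\oplus\chi_{(1,2,1)}$ has dimension $2$. (Your side remark that the ``number of independent targets $e_{1,r+1},\ldots,e_{1k}$'' equals $k-r-1$ is also off by one; there are $k-r$ of them.) The underlying error is that the filtration by commutator length is \emph{not} an $S_n$-submodule decomposition aligned with the second-row length of the partition: each irreducible $\chi_{(n-i,i),\varnothing}$ with small $i$ receives contributions from strata of many different commutator lengths. The route taken in \cite{NaSaVi} is instead to work with explicit highest weight vectors in two variables (polynomials of the form $y_1^{a}y_2^{b}[y_2,y_1,\ldots]$ associated with each shape) and count how many of them survive as non-identities of $N_k$; you would need to redo part (3) along those lines.
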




It is clear that the center of the algebra $N_{k}$ is given by
$Z(N_k) = \spn\{I_k, e_{1k}\}$.
Consequently, for $k\geq 3$, the algebras $N_k$ satisfy the hypotheses of Theorem \ref{proprer}, then $C^{gr}(N_k)$ is a $T_{2}$-ideal generated by proper $\Z_{2}$-polynomials. Our current goal is to determine the $T_{2}$-space $C^{gr}(N_{k})$.

\begin{teo}\label{t-spaceNk} Let $k \geq 3$. Then

$$C^{gr}(N_{k}) = \Idg (N_{k-1}) = \langle [y_{1},\ldots,y_{k-1}],[y_{1},y_{2}][y_{3},y_{4}],z\rangle_{T_{2}}.$$

\end{teo}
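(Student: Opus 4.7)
The plan is to prove both inclusions separately. For $\Idg(N_{k-1}) \subseteq C^{gr}(N_k)$, since the $T_2$-ideal $\Idg(N_{k-1})$ is generated by $[y_1,\ldots,y_{k-1}]$, $[y_1,y_2][y_3,y_4]$ and $z$, and the last two generators already lie in $\Idg(N_k) \subseteq C^{gr}(N_k)$, the only nontrivial task is to verify that $[y_1,\ldots,y_{k-1}]$ is a central polynomial of $N_k$. To this end, I would write any $a \in N_k$ as $a = \alpha I_k + \sum_{s=1}^{k-2}\beta^{(s)}E^s + \sum_{t=2}^{k}\gamma^{(t)}e_{1t}$ and use the multiplication rules $E^s e_{1t}=0$, $e_{1t}E^s = e_{1,s+t}$ (with the convention $e_{1,j}=0$ when $j>k$), and $e_{1t}e_{1t'}=0$, to obtain $[a,b] \in \spn\{e_{13},\ldots,e_{1k}\}$; then by induction on $\ell$, $[a_1,\ldots,a_\ell] \in \spn\{e_{1,\ell+1},\ldots,e_{1k}\}$. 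For $\ell = k-1$ this equals $F\cdot e_{1k} \subseteq Z(N_k)$, as required.

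For the reverse inclusion $C^{gr}(N_k) \subseteq \Idg(N_{k-1})$, I would first apply Theorem \ref{proprer}, whose hypotheses hold here: $N_k$ is unitary with $A_0 = N_k \not\subseteq Z(N_k)$ (since $E \notin Z(N_k)$), the decomposition $N_k = F \dotplus J(N_k)$ holds, and $J' = J(N_k) \cap Z(N_k) = F \cdot e_{1k}$ is a graded ideal. Hence $C^{gr}(N_k)$ is a $T_2$-ideal generated by proper multilinear $\Z_2$-polynomials, and it suffices to show every such polynomial that is central in $N_k$ lies in $\Idg(N_{k-1})$. A polynomial containing a $z$-variable is already in $\Idg(N_k) \subseteq \Idg(N_{k-1})$, so I may assume $f$ is a proper multilinear polynomial in $y_1,\ldots,y_n$. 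Modulo $\Idg(N_k)$, the identity $[y_1,y_2][y_3,y_4] \equiv 0$ kills products of two or more commutators, reducing $f$ to a linear combination of left-normed commutators of length $n$. Since $\gamma_n(N_k) = n-1$ (extracted from $c^{gr}_n(N_k) = 1+\sum_{i=2}^{k-1}\binom{n}{i}(i-1)$), the set $\{c_j := [y_j, y_1, y_2, \ldots, \hat{y}_j, \ldots, y_n] : j = 2,\ldots,n\}$ will serve as a basis for the multilinear proper polynomials of degree $n$ modulo $\Idg(N_k)$, so $f \equiv \sum_{j=2}^{n}\alpha_j c_j \pmod{\Idg(N_k)}$.

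If $n \geq k-1$, then each $c_j$ has length at least $k-1$, hence lies in $\Idg(N_{k-1})$, and so does $f$. The essential case is $n \leq k-2$. For each $i \in \{2,\ldots,n\}$, I would perform the evaluation $y_i \mapsto e_{12}$ and $y_\ell \mapsto E$ for $\ell \ne i$. A direct computation shows that $c_j$ evaluates to $e_{1,n+1}$ when $j = i$ (the first entry of $c_i$ is $e_{12}$ and successive left-bracketings with $E$ produce $e_{13}, e_{14}, \ldots, e_{1,n+1}$) and to $0$ otherwise (the first bracket of $c_j$ for $j \ne i$ is $[E,E] = 0$, which annihilates the whole left-normed commutator). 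Centrality of $f$ then forces $\alpha_i e_{1,n+1} \in Z(N_k) = \spn\{I_k, e_{1k}\}$; since $n+1 \leq k-1 < k$, the element $e_{1,n+1}$ is not central, so $\alpha_i = 0$ for each $i \in \{2,\ldots,n\}$. Thus $f \equiv 0 \pmod{\Idg(N_k)}$ and consequently $f \in \Idg(N_k) \subseteq \Idg(N_{k-1})$. The main obstacle is the careful verification that $\{c_j\}_{j=2}^n$ genuinely spans the multilinear proper part modulo $\Idg(N_k)$; its linear independence is delivered by the same family of evaluations, and combined with the dimension count $\gamma_n(N_k) = n-1$ this confirms the basis.
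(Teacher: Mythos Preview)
Your proof is correct and follows essentially the same approach as the paper's: both directions are handled identically, invoking Theorem~\ref{proprer} to reduce to proper multilinear polynomials, writing these modulo the metabelian identity as combinations of the left-normed commutators $[y_i,y_{i_1},\ldots,y_{i_{s-1}}]$ with $i_1<\cdots<i_{s-1}$, and then using the evaluation $y_i\mapsto e_{12}$, $y_\ell\mapsto E$ for $\ell\neq i$ to force all coefficients to vanish when the degree is at most $k-2$. The only cosmetic difference is that the paper reduces modulo $I=\Idg(N_{k-1})$ directly (so that degrees $\geq k-1$ collapse immediately), whereas you reduce modulo $\Idg(N_k)$ and then observe that the long commutators $c_j$ already lie in $\Idg(N_{k-1})$.
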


\begin{proof} Consider $I = \left\langle \left[ y_{1} , \ldots ,y_{k-1}  \right],[y_{1},y_{2}][y_{3},y_{4}],z \right\rangle _{T_2}$. We can see that $I \subset C^{gr}(N_{k})$, since $[y_{1},y_{2}][y_{3},y_{4}] \in \Idg (N_k)$ and $[\underbrace{N_k , \ldots ,N_k}_{k-1}] \subset \mbox{span}\{e_{1k}\}$. 

We now prove the reverse inclusion. Let $f \in I$ be a multilinear $\Z_{2}$-polynomial of degree $s$. Since $N_{k}$ is a unitary algebra and $N_k$ satisfies the conditions in Theorem \ref{proprer}, we may assume $f$ is a proper $\Z_{2}$-polynomial. After reducing $f$ modulo $I$,  we obtain the following:

(i) If $s \geq k-1$, we have $f \equiv 0$ in $N_k$,

(ii) If $s \leq k-2$, so $f$ is a linear combination of polynomials of the type
$$[y_{i}, y_{i_1}, \ldots , y_{i_{s-1}}],
$$
where $i = 2, \ldots , s$, $i_1< \ldots < i_{s-1}$, that is, 

$$f = \sum\limits_{i = 2}^s {\alpha _i [y_{i} ,y_{i_1} , \ldots ,y_{i_{s - 1}} ]}, \ \alpha_{i} \in F, \ i \in \{2,\ldots,s\}.$$

Suppose that there exists $i$ such that $\alpha _i \neq 0$. By making the evaluation  $\phi$ given by   $y_{i} = e_{12}$ and $y_{j} = E,$ for all $j \neq i$, we get $\phi(f)=\alpha_i e_{1, s+1}$. This is a contradiction, since $e_{1, s+1} \notin Z(N_k)$ for $1\le s \le k-2$. Then, $\alpha _i = 0,$ for all $2 \le i \le s$. Hence $C^{gr}(N_{k})=\Idg (N_{k-1})$.




\end{proof}

As a consequence of the previous theorem  and of equations (\ref{eqcodim}) and (\ref{eqcocar}) we have the next corollary.

\begin{cor} For $k \geq 3$, we have:

\begin{enumerate}
 \item $c_{n}^{gr,z}(N_{k}) = c_{n}^{gr}(N_{k-1}) = 1+ \displaystyle\sum_{i=2}^{k-2} \binom{n}{i} (i-1)$;

 \item $\delta_{n}^{gr}(N_{k}) = \ds\binom{n}{k-1} (k-2)$;
    \item $\chi_{n}^{gr,z}(N_{k}) = \chi_{n}^{gr}(N_{k-1}) = \chi_{(n),\varnothing} + \ds\sum_{i=1}^{k-3}(k-i-2)(\chi_{(n-i,i),\varnothing} + \chi_{(n-i-1,i,1),\varnothing})$
    \item $\chi_{n}(\Delta^{gr}(N_{k})) = \ds\sum_{i=1}^{k-2}(\chi_{(n-i,i),\varnothing} + \chi_{(n-i-1,i,1),\varnothing}).$
\end{enumerate}

\end{cor}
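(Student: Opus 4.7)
The plan is to derive all four items directly from Theorem \ref{t-spaceNk}, which identifies $C^{gr}(N_k)$ with $\Idg(N_{k-1})$, combined with the known data for $N_k$ from the preceding theorem and the additive decompositions \eqref{eqcodim} and \eqref{eqcocar}. The result is genuinely a corollary: no further structural argument about $N_k$ is required beyond the content of Theorem \ref{t-spaceNk}.

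For items (1) and (3), I would first observe that the equality $C^{gr}(N_k) = \Idg(N_{k-1})$ yields $P_n^{gr} \cap C^{gr}(N_k) = P_n^{gr} \cap \Idg(N_{k-1})$, which gives the canonical identification
$$P_n^{gr,z}(N_k) \;=\; \frac{P_n^{gr}}{P_n^{gr} \cap C^{gr}(N_k)} \;=\; \frac{P_n^{gr}}{P_n^{gr} \cap \Idg(N_{k-1})} \;=\; P_n^{gr}(N_{k-1}).$$
Since the $\mathbb{Z}_2 \wr S_n$-action stabilizes both intersections, this is in fact an isomorphism of $\mathbb{Z}_2 \wr S_n$-modules. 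Taking dimensions produces $c_n^{gr,z}(N_k) = c_n^{gr}(N_{k-1})$, and applying the preceding theorem at $k-1$ gives the closed form in (1); taking characters produces (3) in the same stroke.

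For items (2) and (4), I would apply \eqref{eqcodim} and \eqref{eqcocar}, writing $\delta_n^{gr}(N_k) = c_n^{gr}(N_k) - c_n^{gr}(N_{k-1})$ and $\chi_n(\Delta^{gr}(N_k)) = \chi_n^{gr}(N_k) - \chi_n^{gr}(N_{k-1})$. For (2), all but the top summand $i = k-1$ cancels, leaving $\binom{n}{k-1}(k-2)$. For (4), the overlapping range $1 \le i \le k-3$ contributes coefficient $(k-i-1) - (k-i-2) = 1$, while the unpaired term at $i = k-2$ (present only in $\chi_n^{gr}(N_k)$) also has coefficient $1$, so the two contributions assemble into the uniform sum $\sum_{i=1}^{k-2}\bigl(\chi_{(n-i,i),\varnothing} + \chi_{(n-i-1,i,1),\varnothing}\bigr)$. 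The only point to watch is the index bookkeeping at the top of each sum; no substantive obstacle arises.
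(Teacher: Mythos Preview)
Your proposal is correct and follows exactly the route the paper intends: the corollary is stated immediately after Theorem \ref{t-spaceNk} with only the remark that it is a consequence of that theorem together with equations \eqref{eqcodim} and \eqref{eqcocar}, and your argument unpacks precisely this. The index bookkeeping you carry out for items (2) and (4) is accurate.
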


The last variety considered in this section is the variety $\text{var}^{gr}(UT_2^{gr})$.  Since $UT_{2}^{gr}$ has no proper central polynomials, from the known results, we conclude that $C^{gr}(UT_{2}^{gr}) = \Idg(UT_{2}^{gr}) = \langle z_1z_2 \rangle_{T_{2}}$, $c_{n}^{gr,z}(UT_{2}^{gr}) = c_{n}^{gr}(UT_{2}^{gr}) = 1+n2^{n-1} $, $\delta_n^{gr}(UT_{2}^{gr}) = 0$, $\chi_{n}^{gr,z}(UT_{2}^{gr}) = \chi_{n}^{gr}(UT_{2}^{gr}) = \chi_{(n), \varnothing}  + \displaystyle\sum_{i\ge 1} (n-2i) \chi_{(n-i-1, i), (1)}$ and $\chi_{n}(\Delta^{gr}(UT_{2}^{gr}))  = 0$ (see \cite{Va}).

Now, we are interested in examining the minimal subvarieties in $\textnormal{var}^{gr}(UT_{2}^{gr})$. Denote by $A_{k}^{gr}$, $B_{k}^{gr}$ and $N_{k}^{gr}$ the algebras $A_{k}$, $B_{k}$ and  $N_{k}$ with elementary grading induced by $(0,1,\ldots,1)$, respectively. By \cite[Corollary 6.1]{LaMatalmostGgr} we have that $A$ is a superalgebra generating a minimal variety in $\text{var}^{gr}(UT_{2}^{gr})$ if and only if $\Idg(A) = \Idg(A_{k}^{gr})$ or $\Idg(A) = \Idg(B_{k}^{gr})$ or $\Idg(A) = \Idg(N_{k}^{gr})$, for some  $k \geq 2$. 

In the case of the superalgebras  $A_{k}^{gr}$ and $B_{k}^{gr}$, the $T_{2}$-ideal, the sequence of the graded codimension, and the decomposition of the graded cocharacter have already been determined (see \cite{LaMatalmostGgr}, \cite{NaSaVi}). Once again we use the fact that these algebras have trivial center, and we conclude the following.

\begin{teo} For $k \geq 2$
\begin{enumerate}
    \item $C^{gr}(A_k^{gr})=\Idg(A_{k}^{gr}) = \langle [y_{1},y_{2}],z_{1}y_{2}\cdots y_{k},z_{1}z_{2} \rangle_{T_{2}}$;

    \item $C^{gr}(B_k^{gr})=\Idg(B_{k}^{gr}) = \langle [y_{1},y_{2}],y_{2}\cdots y_{k}z_{1},z_{1}z_{2} \rangle_{T_{2}}$;

    \item $c_{n}^{gr,z}(A) = c_{n}^{gr}(A) = 1 + \ds\sum_{i=0}^{k-2} \binom{n}{i} (n-i)$, for $A\in \{A_k^{gr}, B_k^{gr}\}$;

    \item $\chi_{n}^{gr, z}(A) =\chi_{n}^{gr}(A) = \chi_{(n),\varnothing} + \ds\sum_{i=0}^{k-2}(k-i-1) \chi_{(n-i-1,i),(1)}$, for $A\in \{A_k^{gr}, B_k^{gr}\}$;
    \item $\delta_n^{gr}(A)= 0$ and $\chi_{n}(\Delta^{gr}(A)) = 0$, for $A\in \{A_k^{gr}, B_k^{gr}\}$ .

\end{enumerate}

\end{teo}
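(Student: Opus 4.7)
The plan is to parallel the approach used just above for the trivially graded $A_k, B_k$: reduce the whole theorem to the single observation that the (ungraded) center of both algebras is zero, $Z(A_k^{gr}) = Z(B_k^{gr}) = \{0\}$. Once this is in hand, every graded polynomial taking values in $Z(A)$ evaluates identically to zero under any admissible substitution and is therefore a graded identity, giving the key equality $C^{gr}(A) = \Idg(A)$ for $A \in \{A_k^{gr}, B_k^{gr}\}$.

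For the center of $A_k$, I would take a generic element $x = \alpha e_{11} + \sum_{i=1}^{k-2} \beta_i E^i + \sum_{j=2}^{k} \gamma_j e_{1j}$ and impose the two commutator relations $[x, e_{11}] = 0$ and $[x, E] = 0$. Using $e_{11}E^i = e_{1,i+1}$, $E^i e_{11}=0$, $E e_{1j}=0$, and $e_{1j}E = e_{1,j+1}$ (with the convention $e_{1,k+1}=0$), the first commutator forces $\gamma_k = 0$ and $\gamma_j = -\beta_{j-1}$ for $j = 2,\ldots,k-1$; plugging this back into the second commutator produces $\alpha e_{12} - \sum_{m=3}^{k}\beta_{m-2}e_{1m}$, which vanishes only when $\alpha = 0$ and all $\beta_i = 0$, whence all $\gamma_j = 0$ as well. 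For $B_k^{gr}$ I would run the mirror computation, or equivalently invoke the anti-automorphism of $UT_k$ given by $e_{ij} \mapsto e_{k+1-j,k+1-i}$, which exchanges $A_k$ and $B_k$ and preserves the center.

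Once $C^{gr}(A) = \Idg(A)$ is secured, items (1) and (2) are just the known presentations of $\Idg(A_k^{gr})$ and $\Idg(B_k^{gr})$ from \cite{LaMatalmostGgr}; items (3) and (4) are the graded codimension sequences and graded cocharacter decompositions of these algebras from \cite{NaSaVi}, combined with the immediate equalities $c_n^{gr,z}(A)=c_n^{gr}(A)$ and $\chi_n^{gr,z}(A)=\chi_n^{gr}(A)$; and item (5) is clear from $\Delta_n^{gr}(A) = (P_n^{gr}\cap C^{gr}(A))/(P_n^{gr}\cap \Idg(A)) = 0$. I do not anticipate a real obstacle here: the grading plays no role in the center computation because the definition of a central graded polynomial appeals to the ordinary center $Z(A)$, and the remaining assertions simply collect previously published facts.
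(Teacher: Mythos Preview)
Your proposal is correct and follows essentially the same approach as the paper: the authors simply remark ``once again we use the fact that these algebras have trivial center'' and then list the consequences by citing \cite{LaMatalmostGgr} and \cite{NaSaVi}. Your explicit verification that $Z(A_k)=\{0\}$ via the commutators with $e_{11}$ and $E$ (and the mirror argument for $B_k$) supplies the detail the paper omits, but the logical structure is identical.
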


Finally, for the superalgebra $N_{k}^{gr}$, we have the following result about the sequence of graded codimensions and graded cocharacters.

\begin{teo}(\cite[Theorem 4.1]{LaMatalmostGgr} and \cite[Theorem 6.4]{NaSaVi}) Let $k \geq 2$. Then:

\begin{enumerate}
    \item $\Idg(N_{k}^{gr}) = \langle [y_{1},y_{2}], [z, y_{1},\ldots,y_{k-1}],z_{1}z_{2} \rangle_{T_{2}}$;
    \item $c_{n}^{gr}(N_{k}^{gr}) = 1+ \displaystyle\sum_{i=1}^{k-1} \binom{n}{i} i$;
    \item $\chi_{n}^{gr}(N_{k}^{gr}) = \chi_{(n),\varnothing} + \ds\sum_{i=0}^{k-2}(k-i-1) \chi_{(n-i-1,i),(1)}$.
\end{enumerate}

\end{teo}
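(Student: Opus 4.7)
The plan is to prove (1)--(3) simultaneously. Set $I := \langle [y_1, y_2],\, [z, y_1, \ldots, y_{k-1}],\, z_1 z_2 \rangle_{T_2}$. First I would verify $I \subseteq \Idg(N_k^{gr})$, then produce a spanning set for $P_n^{gr}/(P_n^{gr} \cap I)$ whose cardinality matches the claimed codimension, and finally prove its linear independence inside $P_n^{gr}(N_k^{gr})$ via explicit matrix evaluations; the cocharacter decomposition will follow from recognizing the resulting basis as a direct sum of standard permutation modules.

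To begin, rewrite $N_k$ in the homogeneous basis $\{I_k,\, F,\, F^2, \ldots, F^{k-2}\} \cup \{e_{12}, e_{13}, \ldots, e_{1k}\}$ with $F := \sum_{i = 2}^{k-1} e_{i, i+1}$, noting $F^{k-1} = 0$ and $F^m \cdot e_{1j} = 0$ for all $m \geq 1$. Then $(N_k^{gr})_0$ is commutative, so $[y_1, y_2] \equiv 0$ on $N_k^{gr}$; the odd part satisfies $(N_k^{gr})_1^2 = 0$, so $z_1 z_2 \equiv 0$; and a direct induction on $m$ shows $[z, y_1, \ldots, y_m] \in \spn\{e_{1, \ell} : \ell \geq m + 2\}$, which is forced to vanish at $m = k - 1$. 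Hence $I \subseteq \Idg(N_k^{gr})$.

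For the reverse inclusion, I would reduce multilinear polynomials modulo $I$. Monomials with $r \geq 2$ $z$-variables vanish by $z_1 z_2$; those with $r = 0$ collapse to a scalar multiple of $y_1 \cdots y_n$ by commutativity; those with $r = 1$ reduce, again using commutativity of the $y$'s, to the form $M_B := y_A \, z \, y_B$ with $A \sqcup B = [n-1]$ and $y$-indices increasing within each block. The key step is the Leibniz-type expansion
\[
[z, y_{j_1}, \ldots, y_{j_{k-1}}] \equiv \sum_{S \subseteq \{j_1, \ldots, j_{k-1}\}} (-1)^{|S|} y_S \, z \, y_{\{j_1, \ldots, j_{k-1}\} \setminus S} \pmod{\langle [y_i, y_j] \rangle_{T_2}},
\]
which upon isolating the summand $S = \varnothing$ expresses a monomial with $k - 1$ chosen $y$'s to the right of $z$ as a combination of monomials with strictly fewer $y$'s on the right. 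Iterating this whenever $|B| \geq k - 1$ produces the spanning set $\{y_1 \cdots y_n\} \cup \{M_B : B \subseteq [n-1],\, |B| \leq k - 2\}$, of total size $1 + n \sum_{t=0}^{k-2} \binom{n-1}{t} = 1 + \sum_{i=1}^{k-1} i \binom{n}{i}$. For linear independence in $N_k^{gr}$, for each target $B$ of size $t \leq k - 2$ I evaluate $z = e_{12}$, $y_i = I_k$ for $i \notin B$, and $y_i = F$ for $i \in B$: since $F \cdot e_{12} = 0$, a monomial $M_{B'}$ survives only when every $y$-variable to the left of $z$ equals $I_k$, i.e.\ when $B \subseteq B'$, and under that constraint $M_{B'}$ evaluates to $e_{12} F^t = e_{1, 2 + t} \neq 0$. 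A putative relation $\sum_{B'} c_{B'} M_{B'} = 0$ thus yields $\sum_{B' \supseteq B} c_{B'} = 0$ for every $B$ with $|B| \leq k - 2$, and Möbius inversion on the Boolean lattice forces $c_{B'} = 0$. This settles (1) and (2). For (3), the $r = 0$ piece contributes $\chi_{(n), \varnothing}$; the $r = 1$ piece, stratified by $|B| = i$ for $i = 0, \ldots, k - 2$, is the $S_{n-1}$-permutation module $M^{(n-1-i, i)}$, whose Young decomposition $\bigoplus_{j = 0}^{i} S^{(n-1-j, j)}$ gives, after collecting contributions across $i$, multiplicity $k - 1 - j$ for $S^{(n-1-j, j)}$ in $\chi_{n-1, 1}^{gr}(N_k^{gr})$, hence multiplicity $k - 1 - j$ for $\chi_{(n-1-j, j), (1)}$ in $\chi_n^{gr}(N_k^{gr})$, matching the claim.

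The main obstacle I anticipate is the confluence of the Leibniz reduction: the identity has $2^{k-1}$ summands, and one must argue, say by descending induction on $|B|$ or by lexicographically ordering monomials on $(|B|, B)$, that iterative application terminates exactly at $\{M_B : |B| \leq k - 2\}$ without re-introducing monomials of larger $|B|$ elsewhere in the expansion. Once that is in place, the evaluation argument and the representation-theoretic bookkeeping are routine.
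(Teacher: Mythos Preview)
The paper does not supply its own proof of this theorem; it simply quotes the result from \cite[Theorem 4.1]{LaMatalmostGgr} (for (1) and (2)) and \cite[Theorem 6.4]{NaSaVi} (for (3)). Your proposal is therefore not being measured against an in-paper argument but against a black-box citation. That said, your outline is a correct self-contained proof.

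A few remarks. Your homogeneous basis $\{I_k,F,\dots,F^{k-2},e_{12},\dots,e_{1k}\}$ is well chosen: it makes the verification of the three generating identities immediate and, crucially, gives the annihilation property $F^m e_{1j}=0$ that drives the triangular evaluation argument. The worry you voice about ``confluence'' of the Leibniz reduction is not a real obstacle: once you observe that applying the identity to $y_A\,z\,y_B$ with $|B|\ge k-1$ replaces it by a linear combination of monomials $y_{A\cup S}\,z\,y_{B\setminus S}$ with $|S|\ge 1$, the quantity $|B|$ strictly decreases at every step, so plain induction on $|B|$ terminates at $\{M_B:|B|\le k-2\}$ with nothing to check about re-entry. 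Your linear-independence argument is also fine; note that you do not need full M\"obius inversion, only the downward induction ``if $c_{B'}=0$ for all $|B'|>t$, then the equation $\sum_{B'\supseteq B}c_{B'}=0$ at $|B|=t$ gives $c_B=0$''. Finally, for (3) you are implicitly using the standard fact that the multiplicity of $\chi_{\lambda,\mu}$ in $\chi_n^{gr}(A)$ equals the multiplicity of $\chi_\lambda\otimes\chi_\mu$ in the $S_{n-r}\times S_r$-character of $P_{n-r,r}^{gr}(A)$; the cited paper \cite{NaSaVi} works instead with highest-weight vectors, but your permutation-module route via $M^{(n-1-i,i)}\cong\bigoplus_{j\le i}S^{(n-1-j,j)}$ is equally valid and arguably more transparent here.
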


Our current goal is to determine the $T_{2}$-space $C^{gr}(N_{k}^{gr})$.

\begin{teo}\label{t-spaceNkgr} Let $k \geq 3$. Then

$$C^{gr}(N_{k}^{gr}) = \Idg (N_{k-1}^{gr}) =  \langle [y_{1},y_{2}], [z, y_{1},\ldots,y_{k-2}],z_{1}z_{2} \rangle_{T_{2}}.$$

\end{teo}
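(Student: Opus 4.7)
The plan is to prove both inclusions, following the template of Theorem~\ref{t-spaceNk}. Set $I := \langle [y_1, y_2],\, [z, y_1, \ldots, y_{k-2}],\, z_1 z_2 \rangle_{T_2}$. For $I \subseteq C^{gr}(N_k^{gr})$: both $[y_1, y_2]$ and $z_1 z_2$ already lie in $\Idg(N_k^{gr}) \subseteq C^{gr}(N_k^{gr})$. For $[z, y_1, \ldots, y_{k-2}]$ I would fix a homogeneous basis of $N_k^{gr}$ --- the even part spanned by $I_k$ and $F_j := E^j - e_{1, j+1}$ for $j = 1, \ldots, k-2$, and the odd part by $e_{12}, \ldots, e_{1k}$ --- and verify the elementary bracket rule $[e_{1a}, F_j] = e_{1, a+j}$ whenever $a + j \leq k$ and $0$ otherwise. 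Iterating this $k - 2$ times starting from any odd basis vector $e_{1a}$ (with $a \geq 2$) lands in $\spn\{e_{1k}\} \subseteq Z(N_k^{gr})$, so $[z, y_1, \ldots, y_{k-2}]$ is central. The hypotheses of Theorem~\ref{proprer} hold for $N_k^{gr}$ (the center decomposes as $F \dotplus F e_{1k}$ with $F e_{1k}$ a graded ideal, and $(N_k^{gr})_0 \not\subset Z(N_k^{gr})$ since $[e_{12}, F_1] = e_{13} \neq 0$), so $C^{gr}(N_k^{gr})$ is a $T_2$-ideal and $I \subseteq C^{gr}(N_k^{gr})$.

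For the reverse inclusion, by Theorem~\ref{proprer} and the Drensky--Giambruno result I may take a multilinear proper $\Z_2$-polynomial $f \in C^{gr}(N_k^{gr})$ of degree $n$ with $r$ odd variables. I would split into three cases. If $r \geq 2$, every monomial of $f$ gives a product of two or more odd elements of $N_k^{gr}$, and all such products vanish because $e_{1a} e_{1b} = 0$; hence $f \in \Idg(N_k^{gr}) \subseteq I$. If $r = 0$, then $f$ is a combination of genuine Lie commutators in the $y$-variables, and the identity $[y_1, y_2] \in I$ kills every such commutator of weight $\geq 2$, so $f \equiv 0 \pmod{I}$. If $r = 1$, the identities $[y_1, y_2]$ and $z_1 z_2$ in $I$ together with Jacobi reduce $f$ modulo $I$ to $\alpha\, [z_1, y_1, \ldots, y_{n-1}]$ for some $\alpha \in F$: commutators containing two or more $z$'s evaluate to $0$ (as above), $z$-free commutators vanish, and $[y_i, y_j] \equiv 0$ lets me symmetrize the $y$-entries inside a single commutator $[z_1, y_{i_1}, \ldots, y_{i_{n-1}}]$.

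The decisive step --- which I expect to carry the weight of the argument --- is forcing $\alpha = 0$ when $n \leq k - 2$. If $n \geq k - 1$, the commutator $[z_1, y_1, \ldots, y_{n-1}]$ lies in $\langle [z, y_1, \ldots, y_{k-2}] \rangle_{T_2} \subseteq I$ automatically. If $n \leq k - 2$, I substitute $z_1 = e_{12}$ and every $y_j = F_1$; the bracket rule above gives by induction $[e_{12}, F_1, \ldots, F_1] = e_{1, n+1}$ with $n - 1$ copies of $F_1$, and since $n + 1 \leq k - 1 < k$, the element $e_{1, n+1}$ does not lie in $Z(N_k^{gr})$. The assumed centrality of $f$ then forces $\alpha = 0$ and $f \in \Idg(N_k^{gr}) \subseteq I$. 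The main obstacle I anticipate is the careful bookkeeping in the $r = 1$ case: in particular verifying that once all three generators of $I$ are used to reduce $f$, the only surviving proper monomial modulo $I$ is of the single type $[z_1, y_1, \ldots, y_{n-1}]$, so that the substitution argument really does pin down $\alpha$.
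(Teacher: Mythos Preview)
Your proposal is correct and follows essentially the same route as the paper's proof: establish $I \subseteq C^{gr}(N_k^{gr})$ by checking the generators and invoking Proposition~\ref{prop2} to get a $T_2$-ideal, then reduce an arbitrary multilinear proper central polynomial modulo $I$ to a single left-normed commutator $[z,y,\ldots,y]$ and kill its coefficient by an explicit substitution. Your choice of the homogeneous even element $F_1 = E - e_{12}$ in the substitution is in fact more careful than the paper's use of $E$ itself, which is not homogeneous under the elementary grading induced by $(0,1,\ldots,1)$.
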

\begin{proof}  Let $I = \langle [y_{1},y_{2}], [z, y_{1},\ldots,y_{k-2}],z_{1}z_{2} \rangle _{T_2}$. We notice that $z_{1}z_{2} \in \Idg (N_k^{gr})$ and $[N_k^{(1)} ,\underbrace{N_k^{(0)} , \ldots ,N_k^{(0)}}_{k-2}] \subset \mbox{span}\{e_{1k}\}$. Since $Z(N_k)$ is a graded ideal of $N_k$, we have $I \subset C^{gr}(N_{k}^{gr})$.

Next, we shall prove the opposite inclusion. Let $f \in I$ be a multilinear polynomial of degree $s$. Again, since $N_{k}$ is a unitary algebra and satisfies the conditions in  Theorem \ref{proprer}, we can assume that $f$ is a proper $\Z_2$-polynomial. After reducing $f$ modulo $I$,  we obtain the following:

(i) If $s \geq k-1$, we have $f \equiv 0$.

(ii) If $s \le k-2$, then $f$ is a linear combination of polynomials
$$[z_{i}, y_{i_1}, \ldots , y_{i_{s-1}}],
$$
where $i_1< \ldots < i_{s-1}$ and $1\le i\le n$.

Therefore, modulo $I$, we can assume that, for $1\le s \le k-2$,
$$
f = \sum\limits_{i = 1}^s {\alpha _i [z_{i} ,y_{i_1} , \ldots ,y_{i_{s - 1}} ]}.
$$

Suppose that there exists $i$ such that $\alpha _i \neq 0$. By making the evaluation  $\phi$ given by $z_{i} = e_{12}, z_j=0$ for $j\neq i$, and $y_{i_m} = E,$ for all $m = i_1, \ldots , i_{s-1}$, we get  $\phi(f)=\alpha_i e_{1 \ s+1}$. This is a contradiction, since $e_{1 \ s+1} \notin Z(N_k)$ for $1\le s \le k-2$. Then, $\alpha _i = 0,$ for all $1 \le i \le s$. Therefore $C^{gr}(N_{k}^{gr})=\Idg (N_{k-1}^{gr})$.

\end{proof}

As a consequence of Theorem \ref{t-spaceNkgr} and of equations (\ref{eqcodim}) and (\ref{eqcocar}), we obtain the following.

\begin{cor} For $k \geq 3$, we have:

\begin{enumerate}
 \item $c_{n}^{gr,z}(N_{k}^{gr}) = c_{n}^{gr}(N_{k-1}^{gr}) = 1 + \displaystyle\sum_{i=1}^{k-2} \binom{n}{i} i$;

 \item $\delta_{n}^{gr}(N_{k}^{gr}) = \ds\binom{n}{k-1} (k-1)$;

\item $\chi_{n}^{gr,z}(N_{k}^{gr}) = \chi_{n}^{gr}(N_{k-1}^{gr}) = \chi_{(n),\varnothing} + \ds\sum_{i=0}^{k-3}(k-i-2) \chi_{(n-i-1,i),(1)}$

\item $\chi_{n}(\Delta^{gr}(N_{k}^{gr})) = \ds\sum_{i=0}^{k-2} \chi_{(n-i-1,i),(1)}$.
\end{enumerate}

\end{cor}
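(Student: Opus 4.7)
The plan is to deduce all four items as essentially formal consequences of Theorem \ref{t-spaceNkgr} together with the known formulas for the graded codimensions and cocharacters of $N_{k-1}^{gr}$, and the additive decompositions \eqref{eqcodim}, \eqref{eqcocar}.

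First I would observe that, by Theorem \ref{t-spaceNkgr}, $P_n^{gr} \cap C^{gr}(N_k^{gr}) = P_n^{gr} \cap \Idg(N_{k-1}^{gr})$, so the quotient $P_n^{gr,z}(N_k^{gr})$ is literally equal (as an $\mathbb{Z}_2 \wr S_n$-module) to $P_n^{gr}(N_{k-1}^{gr})$. This immediately yields
\[
c_n^{gr,z}(N_k^{gr}) = c_n^{gr}(N_{k-1}^{gr}) \quad\text{and}\quad \chi_n^{gr,z}(N_k^{gr}) = \chi_n^{gr}(N_{k-1}^{gr}),
\]
and then the previous theorem (applied with $k$ replaced by $k-1$) supplies the explicit formulas asserted in items (1) and (3). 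Note that this requires $k-1 \geq 2$, which is fine since we assume $k \geq 3$.

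Next I would use the decompositions $c_n^{gr}(A) = c_n^{gr,z}(A) + \delta_n^{gr}(A)$ and $\chi_n^{gr}(A) = \chi_n^{gr,z}(A) + \chi_n(\Delta^{gr}(A))$ from \eqref{eqcodim} and \eqref{eqcocar}, applied to $A = N_k^{gr}$, to isolate $\delta_n^{gr}(N_k^{gr})$ and $\chi_n(\Delta^{gr}(N_k^{gr}))$ as the differences between the formulas for $N_k^{gr}$ and $N_{k-1}^{gr}$. For item (2), the difference
\[
\Bigl(1 + \sum_{i=1}^{k-1} \binom{n}{i} i\Bigr) - \Bigl(1 + \sum_{i=1}^{k-2} \binom{n}{i} i\Bigr) = \binom{n}{k-1}(k-1)
\]
gives the formula. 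For item (4), subtracting term-by-term cancels the $\chi_{(n),\varnothing}$ summand, leaves coefficient $(k-i-1)-(k-i-2)=1$ for each $i=0,\ldots,k-3$, and contributes the remaining $\chi_{(n-k+1,k-2),(1)}$ (with coefficient $k-(k-2)-1 = 1$) that appears only in the decomposition for $N_k^{gr}$; summing gives exactly $\sum_{i=0}^{k-2}\chi_{(n-i-1,i),(1)}$.

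There is no serious obstacle here: the one thing worth double-checking is the bookkeeping in the character subtraction, specifically that the ``boundary term'' $i=k-2$ has the correct multiplicity $1$, which is precisely what balances the range mismatch between the two sums. Once this is verified, all four items follow immediately.
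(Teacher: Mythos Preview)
Your proposal is correct and follows exactly the paper's approach: the paper states the corollary as an immediate consequence of Theorem \ref{t-spaceNkgr} together with equations \eqref{eqcodim} and \eqref{eqcocar}, and you have simply spelled out the routine bookkeeping (identifying $P_n^{gr,z}(N_k^{gr})$ with $P_n^{gr}(N_{k-1}^{gr})$ as $\mathbb{Z}_2\wr S_n$-modules, then subtracting) that the paper leaves implicit. Your verification of the boundary term $i=k-2$ in the cocharacter subtraction is exactly right.
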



\section{The variety $\textnormal{var}^{gr}(\mathcal{G})$}

In this section, we are interested in examining the minimal subvarieties in $\textnormal{var}^{gr}(\mathcal{G})$. We recall that for the superalgebra $\mathcal{G}$, we have $\Idg(\mathcal{G})=\langle [y_1,y_2,y_3],z\rangle_{T_2}$, $C^{gr}(\mathcal{G})=\langle [y_1,y_2], y_0[y_1,y_2,y_3],z\rangle^{T_2}$, $c_n^{gr}(\mathcal{G})=2^{n-1}, c_n^{gr,z}(\mathcal{G})=2^{n-2}$, $\delta_n^{gr}(\mathcal{G})=2^{n-2}$, $\chi_n^{gr}(\mathcal{G})=\ds\sum_{\substack{ 1\leq i < n}} \chi_{(n-i,1^{i}),\varnothing}$, $\chi_n^{gr,z}(\mathcal{G})=\ds\sum_{\substack{ 1\leq i \leq n \\ i \text{ even} }} \chi_{(n-i,1^{i}), \varnothing}$ and finally $\chi_{n}(\Delta^{gr}(\mathcal{G}))=\ds\sum_{\substack{ 1\leq i \leq n \\ i \text{ odd} }} \chi_{(n-i,1^{i}), \varnothing}$ (see  \cite{BrKoKrSi}, \cite{OlReg}, \cite{reggrow} ).

Now we consider the finite-dimensional Grassmann algebra: 
$$\mathcal{G}_{t} = \langle 1, e_{1}, \ldots, e_{t} | e_{i}e_{j} = - e_{j}e_{i} \rangle $$
with the trivial grading. We recall that $\Idg(\mathcal{G}_{2k}) = \Idg(\mathcal{G}_{2k+1})$ and by \cite[Corollary 5.3]{LaMat} we have that a superalgebra $A$ generates a minimal variety in $\textnormal{var}^{gr}(\mathcal{G})$ if and only if $\Idg(A) = \Idg(\mathcal{G}_{2k})$ for some  $k \geq 1$. 

For such superalgebras, we have the following.

\begin{teo}(\cite[Theorem 3.5]{GLMPetro} and \cite[Theorem 4.3]{NaSaVi})\label{codg2k}
Let $k\in \N$. 
\begin{enumerate}
\item $\Id^{gr}(\mathcal{G}_{2k})= \langle [y_1,y_2,y_3],
[y_1,y_2][y_3,y_4]\dots[y_{2k+1},y_{2k+2}],z\rangle_{T_{2}}$,   
\item
$ c^{gr}_n(\mathcal{G}_{2k}) = \ds\sum_{i=0}^{k} \binom{n}{2i} $,

\item $\chi_{n}^{gr}(\mathcal{G}_{2k}) = \ds\sum_{i=0}^{2k} \chi_{(n-i,1^{i}),\varnothing}$. 
\end{enumerate}
\end{teo}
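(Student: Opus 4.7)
The plan is to establish (1) and (2) simultaneously via a proper-multilinear analysis for the unitary superalgebra $\mathcal{G}_{2k}$, and to deduce (3) from the $S_n$-structure of the proper part using Pieri's rule.

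First I would verify that each of the three polynomials lies in $\Idg(\mathcal{G}_{2k})$. The identity $z = 0$ is immediate from the trivial grading; $[y_1,y_2,y_3] = 0$ is inherited from the classical Latyshev--Krakowski--Regev identity of $\mathcal{G}$, since $[a,b] = 2a_1 b_1$ (in the natural $\mathbb{Z}_2$-grading of $\mathcal{G}$) is central in $\mathcal{G}$; and $\prod_{j=1}^{k+1}[y_{2j-1}, y_{2j}] = 0$ holds because under any evaluation each factor is a central element of Grassmann-length $\geq 2$, so the product has total length $\geq 2(k+1) > 2k$, forcing a repeated generator and hence zero in $\mathcal{G}_{2k}$. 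Setting $I := \langle [y_1,y_2,y_3],\,[y_1,y_2][y_3,y_4]\cdots[y_{2k+1},y_{2k+2}],\,z\rangle_{T_2}$, this gives $I \subseteq \Idg(\mathcal{G}_{2k})$.

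For the codimension and the reverse inclusion, since $\mathcal{G}_{2k}$ is unitary I would invoke \cite{DrenGi} to reduce to the proper multilinear polynomials via $\cgr(A) = \sum_{j=0}^{n}\binom{n}{j}\gamma_j(A)$. Modulo $[y_1,y_2,y_3]$ the commutators are central, so the proper space of degree $d$ is spanned by products of $r$ disjoint commutators with $d = 2r$; in particular $\gamma_j = 0$ for odd $j$. Regev's formula $c_n(\mathcal{G}) = 2^{n-1}$ combined with the Drensky decomposition forces $\gamma_{2r}(\mathcal{G}) = 1$, so $\gamma_{2r}(\mathcal{G}_{2k}) \leq 1$; the product identity truncates this to $r \leq k$, and for each such $r$ the polynomial $[y_1,y_2]\cdots[y_{2r-1},y_{2r}]$ maps to $2^{r} e_1 \cdots e_{2r} \neq 0$ under $y_i = e_i$. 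Summing yields $\cgr(\mathcal{G}_{2k}) = \sum_{r=0}^{k}\binom{n}{2r}$ and forces $I = \Idg(\mathcal{G}_{2k})$, proving (1) and (2) at once.

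For (3), the $S_{2r}$-action on the $1$-dimensional proper space spanned by $[y_1,y_2][y_3,y_4]\cdots[y_{2r-1},y_{2r}]$ is the sign character $\chi_{(1^{2r})}$, since transposing two variables inside a single commutator flips sign while swapping entire pairs is trivial by centrality. Because $z$ is identically zero, only pairs $(\lambda,\varnothing)$ occur in $\chi_n^{gr}(\mathcal{G}_{2k})$, and the standard Drensky-type decomposition for unitary algebras gives
\[
\chi_n^{gr}(\mathcal{G}_{2k}) \;=\; \sum_{r=0}^{k} \operatorname{Ind}_{S_{2r}\times S_{n-2r}}^{S_n}\!\bigl(\chi_{(1^{2r})} \boxtimes \chi_{(n-2r)}\bigr),
\]
viewed as $(\lambda,\varnothing)$-cocharacters. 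By Pieri's rule each summand equals $\chi_{(n-2r,\,1^{2r}),\varnothing} + \chi_{(n-2r+1,\,1^{2r-1}),\varnothing}$ for $r \geq 1$ (with the $r=0$ summand giving $\chi_{(n),\varnothing}$). Telescoping these terms produces each hook $\chi_{(n-i,1^i),\varnothing}$ for $i = 0, 1, \ldots, 2k$ exactly once, yielding (3); the consistency check $\sum_{i=0}^{2k}\binom{n-1}{i} = \sum_{r=0}^{k}\binom{n}{2r}$ confirms the dimension count.

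The main obstacle is the combinatorial assertion $\gamma_{2r}(\mathcal{G}) = 1$: modulo $[y_1,y_2,y_3]$ all $(2r-1)!!$ distinct pairings of $2r$ indices into $r$ disjoint commutators must collapse to a single one-dimensional space. While the claim is forced by Regev's formula together with a dimension count, a direct combinatorial derivation requires producing pair-swap relations such as $[y_1,y_3][y_2,y_4] + [y_1,y_2][y_3,y_4] \in \Id(\mathcal{G})$ as consequences of the Grassmann identity, and handling all $r$ uniformly is the technical heart of the argument; the references \cite{OlReg,reggrow} dispatch this step.
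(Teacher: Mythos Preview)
The paper does not prove this theorem; it is quoted verbatim from \cite{GLMPetro} and \cite{NaSaVi} and used as input for the subsequent central-polynomial computations. There is therefore no internal proof to compare your attempt against.

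Your argument is correct and follows the standard route one finds in those references: reduce to proper multilinear polynomials via \cite{DrenGi}, show the proper part in degree $2r$ is at most one-dimensional modulo $\langle[y_1,y_2,y_3]\rangle_{T_2}$, truncate at $r=k$ using the long-commutator identity, and recover the hook decomposition via Pieri. Two minor remarks. First, your closing paragraph overstates the difficulty of $\gamma_{2r}(\mathcal{G})=1$: once you know $\gamma_{2r+1}(\mathcal{G})=0$ and $c_n(\mathcal{G})=2^{n-1}$ for all $n$, the triangular system $\sum_r\binom{n}{2r}\gamma_{2r}=2^{n-1}$ forces $\gamma_{2r}=1$ uniquely by induction on $r$, so no separate combinatorial collapse is needed. (The paper's Remark~\ref{comutadores}(2) records exactly the relation you mention, but for the purposes of (1)--(2) your indirect route suffices.) Second, ``telescoping'' is a slight misnomer in your Pieri step: the summands $\chi_{(n-2r+1,1^{2r-1}),\varnothing}+\chi_{(n-2r,1^{2r}),\varnothing}$ for $r=1,\dots,k$ simply partition the hooks with leg $1,\dots,2k$ into consecutive pairs, with $r=0$ contributing $\chi_{(n),\varnothing}$; nothing cancels.
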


Since $\mathcal{G}_{2k}$ and $\mathcal{G}_{2k+1}$
satisfy the same graded identities, it follows from Remark \ref{rie} that they also share the same central graded polynomials. Concerning the study of central graded polynomials for the superalgebras $\mathcal{G}_{2k}$, the corresponding $T_2$-space has already been determined. 
\begin{teo} (\cite[Proposition 6]{KoKraSil})\label{G2k} For $k\geq 1,$
$$C^{gr}(\mathcal{G}_{2k})=\langle [y_{1},y_{2}], y_0[y_1,y_2,y_3],y_0[y_1,y_2][y_3,y_4]\dots[y_{2k-1},y_{2k}],z\rangle^{T_{2}}.$$
\end{teo}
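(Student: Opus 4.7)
Write $I:=\langle [y_1,y_2],\ y_0[y_1,y_2,y_3],\ y_0[y_1,y_2][y_3,y_4]\cdots[y_{2k-1},y_{2k}],\ z\rangle^{T_2}$, and recall that $Z(\mathcal{G}_{2k})=\mathcal{G}_{2k}^{(0)}$ (the Grassmann-even part). The plan is to prove the two inclusions $I\subseteq C^{gr}(\mathcal{G}_{2k})$ and $C^{gr}(\mathcal{G}_{2k})\subseteq I$ separately, the second being the substantive one.

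For $I\subseteq C^{gr}(\mathcal{G}_{2k})$ it suffices, by closure of $C^{gr}(\mathcal{G}_{2k})$ under $T_2$-substitutions, to check that each generator evaluates into $\mathcal{G}_{2k}^{(0)}$. The bracket $[y_1,y_2]$ always lands in $\mathcal{G}_{2k}^{(0)}$ because any commutator in the Grassmann algebra is even; $z$ and $y_0[y_1,y_2,y_3]$ are graded identities of $\mathcal{G}_{2k}$ (trivial grading, Grassmann identity); for $y_0[y_1,y_2]\cdots[y_{2k-1},y_{2k}]$ the bracket string evaluates to a scalar multiple of $e_1\cdots e_{2k}$, and left multiplication by $y_0$ kills every positive-length component of $y_0$ (each meets $\{e_1,\ldots,e_{2k}\}$), leaving a scalar multiple of the central element $e_1\cdots e_{2k}$.

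For the converse, take $f\in C^{gr}(\mathcal{G}_{2k})$. After multilinearization and using $z\in I$, assume $f=f(y_1,\ldots,y_n)$. A short manipulation, together with Theorem~\ref{codg2k}(1), shows $\Idg(\mathcal{G}_{2k})\subseteq I$, so modulo identities we may write
\[
f\equiv \sum_{(I_0,J_0)}\alpha_{I_0,J_0}\,y_{i_1}\cdots y_{i_r}[y_{j_1},y_{j_2}]\cdots[y_{j_{2s-1}},y_{j_{2s}}],
\]
with $I_0\sqcup J_0=\{1,\ldots,n\}$, $I_0=\{i_1<\cdots<i_r\}$, $J_0=\{j_1<\cdots<j_{2s}\}$, $0\le s\le k$. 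I partition these basis elements into three classes: $(A)$ $r=0$; $(B)$ $r\ge 1$ and $s=k$; and $(C)$ $r\ge 1$ and $s\le k-1$. Class $(A)$ collapses, via the iterated reduction $[a,b]\cdot c\equiv[ac,b]\pmod{[y_1,y_2,y_3]}$ (valid whenever $c$ is a product of commutators, because then $[c,b]$ is a sum of $3$-iterated brackets that vanish by the Grassmann identity), into a single nested bracket $[g_1,g_2]$, placing it in $\langle[y_1,y_2]\rangle^{T_2}\subseteq I$. Class $(B)$ is obtained from the generator $y_0[y_1,y_2]\cdots[y_{2k-1},y_{2k}]$ by the substitution $y_0\mapsto y_{i_1}\cdots y_{i_r}$, hence lies in $I$. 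Therefore, modulo $I$, $f$ reduces to a linear combination of class $(C)$ basis elements only.

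The crux of the proof is to show that no nonzero combination of class $(C)$ basis elements is central in $\mathcal{G}_{2k}$. For a basis element $u_{I_0,J_0}$ of class $(C)$ with $|J_0|=2s\le 2k-2$ and a distinguished index $i_*\in I_0$, I would test centrality on the substitution $y_{j_l}\mapsto e_l$ for $j_l\in J_0$, $y_{i_*}\mapsto e_{2s+1}$, and $y_i\mapsto 1$ for the remaining $i\in I_0$; this is admissible because $2s+1\le 2k-1$. Then $u_{I_0,J_0}$ produces a nonzero scalar multiple of $e_1\cdots e_{2s+1}\in\mathcal{G}_{2k}^{(1)}\setminus Z(\mathcal{G}_{2k})$, while the only basis elements contributing nonzero values under this evaluation are those whose bracket block is contained in $J_0\cup\{i_*\}$. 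Sweeping $i_*$ through $I_0$ and running $J_0$ through all even-size subsets of $\{1,\ldots,n\}$ of size at most $2k-2$, the vanishing of the odd component of $f$ yields a linear system whose only solution is $\alpha_{I_0,J_0}=0$ for every class $(C)$ index. I expect the main obstacle to be the combinatorial bookkeeping required to disentangle the overlapping contributions; a cleaner alternative is to decompose the span of class $(C)$ into $S_n$-isotypic components of hook type and argue via highest-weight vectors that no such copy can lie in $C^{gr}(\mathcal{G}_{2k})$. Either way, $f\equiv 0\pmod I$, giving $C^{gr}(\mathcal{G}_{2k})=I$.
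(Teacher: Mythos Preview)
The paper does not prove this statement; it is quoted from \cite{KoKraSil}. So there is no in-paper argument to compare against directly, though the evaluation technique you outline is close in spirit to the independence argument the paper carries out in the theorem immediately following Remark~\ref{comutadores}.

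Your argument contains a genuine gap. The assertion that ``no nonzero combination of class~(C) basis elements is central in $\mathcal{G}_{2k}$'' is false. For $k\ge 2$ and $n=3$, both $y_1[y_2,y_3]$ and $y_2[y_1,y_3]$ are class~(C) elements, and
\[
y_1[y_2,y_3]+y_2[y_1,y_3]\;=\;[y_1y_2,y_3]-[y_1,y_3,y_2]\;\equiv\;[y_1y_2,y_3]\pmod{\Idg(\mathcal{G}_{2k})},
\]
which lies in $\langle[y_1,y_2]\rangle^{T_2}\subseteq I$ and is therefore central. Hence the linear system produced by your evaluations cannot force all class~(C) coefficients to vanish; at best it can show that the residual combination belongs to $I$, which is what you actually need but not what you claim. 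The fix is to use the generator $[y_1,y_2]\in I$ once more \emph{before} evaluating: the relation in Remark~\ref{comutadores}(1), iterated, rewrites every class~(C) term modulo $I$ as a combination of class~(C) terms with $1\notin J_0$ (that is, with $y_1$ always in the monomial prefix). On this restricted family the evaluation $y_1\mapsto e_1$, $y_j\mapsto e_j$ for $j\in J_0$, $y_j\mapsto 1$ otherwise, applied to a set $J_0$ of minimal size carrying a nonzero coefficient, now isolates a single surviving term and forces that coefficient to zero; this is precisely the independence step in the paper's computation of $c_n^{gr,z}(\mathcal{G}_{2k})$ right after Remark~\ref{comutadores}. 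Your isotypic/highest-weight alternative is plausible in principle (each hook occurs with multiplicity one and $I\cap P_n$ is an $S_n$-submodule), but as written it is only a sketch and does not sidestep the issue above.
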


 We now present the results for $c_{n}^{gr,z}(\mathcal{G}_{2k})$ and $\chi_{n}^{gr,z}(\mathcal{G}_{2k})$. To achieve our goal, we need the following technical remark.

{\begin{obs}{\label{comutadores}} \hfill

\begin{enumerate}
    \item $y_{2}\cdots y_{n-1}[y_1,y_n] \equiv -\ds y_1\sum_{i=2}^{n-1} y_2\cdots y_{i-1}y_{i+1}\cdots y_{n-1}[y_i,y_n]$ mod $\langle [y_1,y_2], y_0[y_1,y_2,y_3]\rangle^{T_{2}}$.
    \item Let $\sigma \in S_{2p}$, $p \in \N$. Then
$$[y_{\sigma(1)},y_{\sigma(2)}]\cdots [y_{\sigma(p-1)},y_{\sigma(2p)}] \equiv \text{sgn}(\sigma)[y_{1},y_{2}]\cdots[y_{{2p-1}},y_{{2p}}]
\mod \langle [y_1,y_2,y_3]\rangle_{T_{2}}.$$
\end{enumerate}

\end{obs}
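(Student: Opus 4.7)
The plan is to reduce both identities to iterated applications of the Leibniz rule $[fg,h] = f[g,h] + [f,h]g$, together with two auxiliary facts that I would first establish: (a) every single commutator $[u, v]$ lies in the $T_2$-space $\langle [y_1, y_2]\rangle^{T_2}$ by direct substitution; and (b) any expression of the form $p[a, b, c]q$ belongs to $\langle y_0[y_1, y_2, y_3]\rangle^{T_2}$. I would prove (b) by induction on $\deg q$, writing $q = x q'$ and using $[a, b, cx] = [a, b, c]x + c[a, b, x]$ to rewrite $p[a,b,c]xq'$ as $p[a,b,cx]q' - pc[a,b,x]q'$, each term being of the form required by the induction hypothesis (the base case $q = 1$ is a direct substitution into the generator).

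For part (1), I would start from the Leibniz expansion
$$[y_1 y_2 \cdots y_{n-1},\, y_n] \;=\; \sum_{i=1}^{n-1} y_1 \cdots y_{i-1}\, [y_i, y_n]\, y_{i+1} \cdots y_{n-1},$$
whose left-hand side is a single commutator and hence congruent to zero modulo $\langle [y_1, y_2]\rangle^{T_2}$ by (a). For each $i \geq 2$ I would commute $[y_i, y_n]$ past $y_{i+1} \cdots y_{n-1}$ to the right end of its summand; the corrections produced at each step have the form $p\,[y_i, y_n, y_j]\,q$ and hence vanish modulo $\langle y_0[y_1, y_2, y_3]\rangle^{T_2}$ by (b). For the $i = 1$ summand there is no factor to the left, so I would instead commute $[y_1, y_n]$ past the whole monomial $y_2 \cdots y_{n-1}$ in one step using only (a): the correction is the single commutator $[[y_1, y_n],\, y_2 \cdots y_{n-1}]$. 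Substituting back and isolating $y_2 \cdots y_{n-1}[y_1, y_n]$ yields the claimed congruence.

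For part (2), I would first observe that modulo the $T_2$-\emph{ideal} $\langle [y_1, y_2, y_3]\rangle_{T_2}$ any two two-variable commutator factors commute, since $[a,b][c,d] - [c,d][a,b] = [[a,b],[c,d]]$ is itself a triple commutator, hence in the ideal. It then suffices to verify the signed identity on the adjacent transpositions $\tau_i = (i, i+1)$ that generate $S_{2p}$. For $\tau_i = (2k-1, 2k)$ the antisymmetry $[y_{2k}, y_{2k-1}] = -[y_{2k-1}, y_{2k}]$ produces the sign $-1$ directly. For $\tau_i = (2k, 2k+1)$ I would prove the local relation $[y_1, y_3][y_2, y_4] \equiv -[y_1, y_2][y_3, y_4]$ by expanding both $[y_1 y_2,\, y_3,\, y_4]$ and $[y_1,\, y_2 y_3,\, y_4]$ via Leibniz; each expansion gives four terms, two of which are triple commutators that die in the ideal, leaving $[y_1, y_3][y_2, y_4] + [y_1, y_4][y_2, y_3] \equiv 0$ and $[y_1, y_2][y_3, y_4] + [y_2, y_4][y_1, y_3] \equiv 0$, respectively. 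Combining these two relations with the commutativity of commutator products yields the required sign. The general case follows by induction on the length of a reduced word for $\sigma$ in the $\tau_i$.

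The principal obstacle is the careful handling of the $T_2$-\emph{space} (rather than $T_2$-ideal) in part (1): the auxiliary lemma $p[a,b,c]q \in \langle y_0[y_1, y_2, y_3]\rangle^{T_2}$ is not automatic and really does require the recursive absorption of the tail $q$ into the third slot of the triple commutator sketched above. Once (a) and (b) are in place, part (1) is essentially bookkeeping, and part (2) is a routine verification on generators of $S_{2p}$.
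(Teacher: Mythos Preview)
Your proposal is correct. The paper records this statement as a remark without proof, but your approach---the Leibniz expansion of $[y_1\cdots y_{n-1},y_n]$ for part (1) and verification on the adjacent transpositions generating $S_{2p}$ for part (2)---is exactly the argument the authors had in mind (it coincides with the suppressed auxiliary lemmas visible in the source); the only simplification worth noting is that your inductive lemma (b) is unnecessary, since $u[a,b]v-uv[a,b]=u[a,b,v]$ lies in $\langle y_0[y_1,y_2,y_3]\rangle^{T_2}$ by a single substitution $y_0\mapsto u$, $y_3\mapsto v$, so each commutator factor can be slid past the whole tail in one step rather than variable by variable.
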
}

We are now in a position to prove the following result:

\begin{teo} For $k \in \mathbb{N}$, we have

\begin{enumerate}
    \item $\ds c^{gr,z}_n(\mathcal{G}_{2k})= \ds\sum_{i=0}^{k-1}
\binom{n-1}{2i}$;
 \item $\ds \delta^{gr}_n(\mathcal{G}_{2k})= \ds \binom{n}{2k} + \sum_{i=0}^{k-2}\binom{n-1}{2i+1}$;

   \item $\chi_n^{gr,z} (\mathcal{G}_{2k})= \ds\sum_{i=0}^{k-1} \chi_{(n-2i, 1^{2i}),\varnothing}$;
   \item $\chi_{n}(\Delta^{gr} (\mathcal{G}_{2k}))=\chi_{(n-2k,1^{2k}),\varnothing}+ \ds\sum_{i=0}^{k-1} \chi_{(n-2i-1, 1^{2i+1}),\varnothing}$.
\end{enumerate}

\end{teo}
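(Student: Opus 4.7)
The plan is to identify the isotypic decomposition of $\Delta_n^{gr}(\mathcal{G}_{2k}) = (P_n^{gr}\cap C^{gr}(\mathcal{G}_{2k}))/(P_n^{gr}\cap\Id^{gr}(\mathcal{G}_{2k}))$; the formula for $\chi_n^{gr,z}$ will follow from (\ref{eqcocar}) and Theorem \ref{codg2k}, and the two dimension formulas from the hook-length formula $d_{(n-i,1^i)}=\binom{n-1}{i}$ together with Pascal's identity $\binom{n-1}{2k-1}+\binom{n-1}{2k}=\binom{n}{2k}$. Since $z\in\Id^{gr}(\mathcal{G}_{2k})$, I restrict to multilinear polynomials in the $y$-variables; by Theorem \ref{codg2k}(1), $P_n^{gr}(\mathcal{G}_{2k})$ has the basis of proper monomials $y_{j_1}\cdots y_{j_{n-2s}}[y_{i_1},y_{i_2}]\cdots[y_{i_{2s-1}},y_{i_{2s}}]$ for $s=0,1,\ldots,k$. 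Let $V_s$ denote the span of those basis elements with exactly $s$ commutators. Using $[y_1,y_2,y_3]\in\Id^{gr}(\mathcal{G}_{2k})$ (so each $[y_i,y_j]$ is central and the leading $y$'s commute modulo higher-commutator-count terms) together with Remark \ref{comutadores}(2), I would identify $V_s$ as the induced $S_n$-module $\mathrm{Ind}_{S_{n-2s}\times S_{2s}}^{S_n}(\mathrm{triv}\boxtimes\mathrm{sgn})$; Pieri's rule then yields $V_s = \chi_{(n-2s,1^{2s}),\varnothing}+\chi_{(n-2s+1,1^{2s-1}),\varnothing}$ for $s\geq 1$ and $V_0=\chi_{(n),\varnothing}$, consistent with Theorem \ref{codg2k}(3).

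By Theorem \ref{G2k}, modulo $\Id^{gr}$ the space $\Delta_n^{gr}(\mathcal{G}_{2k})$ is spanned by the multilinear $T_2$-substitutions of the two generators $[y_1,y_2]$ and $y_0[y_1,y_2][y_3,y_4]\cdots[y_{2k-1},y_{2k}]$. For each $s\in\{1,\ldots,k\}$ I plan to substitute $g_1 = y_1[y_2,y_3][y_4,y_5]\cdots[y_{2s-2},y_{2s-1}]$ and $g_2 = y_{2s}y_{2s+1}\cdots y_n$ into $[y_1,y_2]$; a Leibniz expansion combined with the centrality of each $[y_i,y_j]$ yields the nonzero central polynomial $[g_1,g_2]\equiv \sum_{j=2s}^{n} y_{2s}\cdots\widehat{y_j}\cdots y_n\,[y_1,y_j][y_2,y_3]\cdots[y_{2s-2},y_{2s-1}]\in V_s$ modulo $\Id^{gr}(\mathcal{G}_{2k})$. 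For $s=k$ I would additionally substitute $g_0 = y_1\cdots y_{n-2k}$ and $g_i = y_{n-2k+i}$ into $y_0[y_1,y_2]\cdots[y_{2k-1},y_{2k}]$, producing the central basis element $y_1\cdots y_{n-2k}[y_{n-2k+1},y_{n-2k+2}]\cdots[y_{n-1},y_n]$; since its $S_n$-orbit exhausts $V_k$, the whole $V_k$ lies in $\Delta_n^{gr}(\mathcal{G}_{2k})$.

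Finally I would show that the complementary even-arm components $\chi_{(n-2s,1^{2s}),\varnothing}$ for $s\in\{0,1,\ldots,k-1\}$ are \emph{not} central. Take the essential polynomial $q_s:=\mathrm{St}_{2s+1}(y_1,\ldots,y_{2s+1})\,y_{2s+2}\cdots y_n$ (reducing to $y_1\cdots y_n$ for $s=0$), which lies in the $\chi_{(n-2s,1^{2s}),\varnothing}$-isotypic component of $V_s$ thanks to its antisymmetry in the first $2s+1$ variables, and evaluate at $y_j=e_j$ for $j\leq 2s+1$ and $y_j=1$ for $j\geq 2s+2$: the result is $(2s+1)!\,e_1 e_2\cdots e_{2s+1}$, an odd-length element of $\mathcal{G}_{2k}$ which does not commute with the generator $e_{2s+2}$ (available because $2s+2\leq 2k$), hence is not central. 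Multiplicity-one of each hook in $\chi_n^{gr}(\mathcal{G}_{2k})$ then excludes the entire isotypic component from $\Delta_n^{gr}(\mathcal{G}_{2k})$, and combining with the previous paragraph gives $\chi_n(\Delta^{gr}(\mathcal{G}_{2k})) = \chi_{(n-2k,1^{2k}),\varnothing}+\sum_{i=0}^{k-1}\chi_{(n-2i-1,1^{2i+1}),\varnothing}$; $\chi_n^{gr,z}$ then follows from (\ref{eqcocar}), and the two dimension formulas from the hook-length and Pascal identities stated above. The main technical obstacle is the Pieri decomposition of $V_s$ in the first paragraph: rigorously extracting the induced-module structure from only the identity $[y_1,y_2,y_3]=0$ and Remark \ref{comutadores}(2) requires careful bookkeeping of the $y$-prefix/commutator interaction, after which the multiplicity-one feature reduces the rest of the argument to producing individual witnesses.
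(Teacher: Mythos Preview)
Your route is genuinely different from the paper's. The paper first builds an explicit basis of $P_n^{gr,z}(\mathcal{G}_{2k})$, namely the polynomials $v_{J_r}=y_1 y_{i_2}\cdots y_{i_{n-2r}}[y_{j_1},y_{j_2}]\cdots[y_{j_{2r-1}},y_{j_{2r}}]$ with $J_r\subseteq\{2,\ldots,n\}$, $|J_r|=2r$, $0\le r\le k-1$; it proves spanning from Theorem~\ref{G2k} and Remark~\ref{comutadores}, proves independence by evaluating $y_1\mapsto e_1$ and selected $y_j\mapsto e_j$, counts to get $c_n^{gr,z}=\sum_{i=0}^{k-1}\binom{n-1}{2i}$, and only \emph{then} recovers the cocharacter by matching $\sum_{i=0}^{2k}\alpha_i\binom{n-1}{i}$ to this value with $\alpha_i\in\{0,1\}$. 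Your plan---decide for each hook directly whether it lies in $\Delta_n^{gr}$ and read off both cocharacter and codimension---is more structural and would bypass that matching step.

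There is, however, a real gap in your non-centrality step. You assert that $q_s=\mathrm{St}_{2s+1}(y_1,\ldots,y_{2s+1})\,y_{2s+2}\cdots y_n$ lies in the $\chi_{(n-2s,1^{2s}),\varnothing}$-isotypic component ``thanks to its antisymmetry in the first $2s+1$ variables''. Antisymmetry in $2s+1$ variables only forces $q_s$ into the span of hooks $\chi_{(n-i,1^i)}$ with $i\ge 2s$, not into the single hook $i=2s$: already $q_0=y_1\cdots y_n$ has a nonzero component in every hook of $P_n^{gr}(\mathcal{G}_{2k})$ (for $n=2$ one has $y_1y_2=\tfrac12(y_1y_2+y_2y_1)+\tfrac12[y_1,y_2]$). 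Hence the non-centrality of $q_s$ does not single out which hook fails to lie in $\Delta_n^{gr}$. The same problem affects the $[g_1,g_2]$ witnesses: their centrality shows only that each of their isotypic components lies in $\Delta_n^{gr}$, but you have not identified those components. Underlying both issues is that the spaces $V_s$ are \emph{not} $S_n$-submodules (swapping two prefix variables leaks into $V_{s+1}$); only the filtration quotients $W_{\ge s}/W_{\ge s+1}$ carry the induced-module structure you invoke, and your polynomials live in $W_{\ge s}$, not in those quotients. The strategy is salvageable---replace $q_s$ by the image of $y_1\cdots y_n$ under the full Young symmetrizer for the hook tableau, which \emph{does} lie in a single isotypic component, and redo the evaluation---but this is substantially more work than the antisymmetry shortcut, and more than the paper's direct basis count.
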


\begin{proof}

\

First, let $n\geq 2k-1$ and for each $r$ such that $0\le r\le k-1$, consider all possible subsets $J_r=\{ j_1,j_2,\dots,j_{2r}\} \subseteq\{2,3,\dots,n\}$ with  $1<j_1<j_2< \dots < j_{2r}$. Note that for a fixed $r$, there are $n_r:=\ds\binom{n-1}{2r}$ such sets $J_r$. For a fixed subset $J_r$, let $\{ i_1, \dots,i_{n-2r}\}=\{1,2,\dots,n\} \setminus J_r$ with $1=i_1<\dots<i_{n-2r}$ and define the following multilinear polynomial of degree $n$
$$v_{J_r}:=y_{1}y_{i_2}\cdots y_{i_{n-2r}}[y_{j_1},y_{j_2}][y_{j_3},y_{j_4}]\dots[y_{j_{2r-1}},y_{j_{2r}}].$$

 Now let $\tilde{J}_r=\{v_{J_r^1}, \ldots, v_{J_r^{n_r}}\}$ be the set of all possible polynomials $v_{J_r}$ constructed as above for each choice of the subsets $J_r$ and let $\mathcal{B}= \bigcup \tilde{J}_r$ where $r$ runs through values from $0$ to $k-1$. Our goal is to show that $\mathcal{B}$ is a basis of $P_{n}^{gr,z}(\mathcal{G}_{2k})$.

Let $f\in P_{n}^{gr}$. Observing that $C^{gr}(\mathcal{G}_{2k})=\langle [y_{1},y_{2}], y_0[y_1,y_2,y_3],y_0[y_1,y_2][y_3,y_4]\dots[y_{2k-1},y_{2k}],z\rangle^{T_{2}}$, by Theorem \ref{G2k},  we may express $f$ as a linear combination of polynomials of the form 
$$y_{i_1}y_{i_2}\cdots y_{i_{n-2s}}[y_{j_1},y_{j_2}][y_{j_3},y_{j_4}]\dots[y_{j_{2s-1}},y_{j_{2s}}]~ \mbox{mod}~ C^{gr}(\mathcal{G}_{2k})$$ 
where  $i_1<\dots<i_{n-2s}$ e $s\in \{ 0,1,\dots , k-1\} $. By Remark \ref{comutadores} we may assume $j_1<j_2<\dots <j_{2s-1}<j_{2s}$ and conclude $y_{i_1}y_{i_2}\cdots y_{i_{n-2s}}[y_{j_1},y_{j_2}][y_{j_3},y_{j_4}]\dots[y_{j_{2s-1}},y_{j_{2s}}]$ is a linear combination of polynomials in $\mathcal{B}$ modulo $C^{gr}(\mathcal{G}_{2k})$.

 Now, consider 
\begin{equation}\label{eq1}
c_{0}y_{1} \cdots y_{n} + \ds\sum_{i=1}^{n_{1}} c_{1}^{i}v_{J_1^i} + \sum_{i=1}^{n_{2}} c_{2}^{i}v_{J_2^i} + \cdots + \sum_{i=1}^{n_{k-1}} c_{k-1}^{i}v_{J_{k-1}^i} \equiv 0  ~\mbox{mod}~C^{gr}(\mathcal{G}_{2k})
\end{equation}
where $c_0, c_t^i\in F$ for $1\leq t\leq k-1$.

Consider the evaluation $\varphi$ defined by $\varphi(y_{1}) = e_{1}$ and $\varphi(y_{j}) = 1$, for all $j \neq 1$. Applying $\varphi$ to (\ref{eq1}) yields 
$$\varphi \left(c_{0}y_{1} \cdots y_{n} + \ds\sum_{i=1}^{n_{1}} c_{1}^{i}v_{J_1^i} + \sum_{i=1}^{n_{2}} c_{2}^{i}v_{J_2^i} + \cdots + \sum_{i=1}^{n_{k-1}} c_{k-1}^{i}v_{J_{k-1}^i}\right) = c_{0}e_{1} \in Z(\mathcal{G}_{2k})$$ 
which implies $c_{0} = 0$.

Now, suppose that for some $t$, there exists a coefficient $c_{t}^{i_{0}}\neq 0$ where $J_t^{i_{0}}$ is chosen so that $|J_t^{i_{0}}|$ is as small as possible. Then (\ref{eq1}) can be written as
\begin{equation}\label{eq2}
 \ds\sum_{i=1}^{n_{t}} c_{t}^{i}v_{J_t^i} + \cdots + \sum_{i=1}^{n_{k-1}} c_{k-1}^{i}v_{J_{k-1}^i} \equiv 0  ~\mbox{mod}~C^{gr}(\mathcal{G}_{2k})
\end{equation}
where $c_s^i\in F,$ for $t\leq s\leq k-1$.
Consider the evaluation 
\[
\phi:\left\{   
\begin{array}{ll}
y_1=e_1 & \\
y_j=e_j & \text{ if } j\in J_t^{i_{0}}, \\
y_j=1 &  \text{ if } j\in \{2,3,\dots,n\} \setminus J_t^{i_{0}}.
\end{array}
\right. 
\]

Evaluating $\phi$ in (\ref{eq2}), we obtain

$$\phi \left(\ds\sum_{i=1}^{n_{t}} c_{t}^{i}v_{J_t^i} + \cdots + \sum_{i=1}^{n_{k-1}} c_{k-1}^{i}v_{J_{k-1}^i} \right) = c_{t}^{i_{0}} \alpha e_1  \ds\prod_{i\in J_t^{i_{0}}} e_{i} \in Z(\mathcal{G}_{k})$$
where $\alpha = 2^{|J_t^{i_{0}}|/2}$.

Since $e_1\ds \prod_{i\in J_t^{i_{0}}} e_{i}$ is an odd element and $c_{t}^{i_{0}} \alpha e_1\ds \prod_{i\in J_t^{i_{0}}} e_{i} \neq 0$, it follows that $c_{t}^{i_{0}}=0$, which is a contradiction. So we conclude that all coefficients $c_{i}^{t}$ must be zero. 

Hence, $\mathcal{B}$ is a basis for $P_n^{gr,z}(\mathcal{G}_{2k})$. By counting its elements and using Theorem \ref{codg2k} and equation (\ref{eqcodim}), we conclude
\begin{equation}\label{codimcentralgk}\ds c^{gr,z}_n(\mathcal{G}_{2k})= \ds\sum_{i=0}^{k-1}
\binom{n-1}{2i} \  \  \mbox{and} \  \ \ds \delta^{gr}_n(\mathcal{G}_{2k})= \ds \binom{n}{2k} + \sum_{i=0}^{k-2}\binom{n-1}{2i+1}.
\end{equation}

We now present the decomposition of the $n$th central graded and the $n$th graded proper central cocharacter. It is known that $\chi_{n}^{gr}(\mathcal{G}_{2k}) = \ds\sum_{i=0}^{2k} \chi_{(n-i,1^{i}),\varnothing}$ is the decomposition of the $n$th graded cocharacter of $\mathcal{G}_{2k}$. Then by equation (\ref{eqcocar}), we have $\chi_{n}^{gr,z}(\mathcal{G}_{2k}) = \ds\sum_{i=0}^{2k} \alpha_i\chi_{(n-i,1^{i}),\varnothing},$ where  $\alpha_0, \alpha_1, \dots, \alpha_{2k}\in \{0,1\}$. Then, since $c_n^{gr,z} (\mathcal{G}_{2k})= \chi_n^{gr,z} (\mathcal{G}_{2k})(1)$, we have  
$$
c_n^{gr,z} (\mathcal{G}_{2k})= \ds\sum_{i=0}^{2k} \alpha_id_{(n-i,1^{i}),\varnothing} = \ds\sum_{i=0}^{2k} \alpha_i\binom{n-1}{i}$$
From equation (\ref{codimcentralgk}) 
$c^{gr,z}_n(\mathcal{G}_{2k})=
\ds\binom{n-1}{0}+ \binom{n-1}{2}+\binom{n-1}{4}+\dots+\binom{n-1}{2k-2}$, we obtain
$\alpha_0=\alpha_2=\dots=\alpha_{2k-2}=1$ and $\alpha_1=\alpha_3=\dots=\alpha_{2k-1}=\alpha_{2k}=0$. Thus, we conclude that
$$\chi_n^{gr,z} (\mathcal{G}_{2k})= \ds\sum_{i=0}^{k-1} \chi_{(n-2i, 1^{2i}),\varnothing}\ \ \mbox{and}
\ \ \chi_{n}(\Delta^{gr} (\mathcal{G}_{2k}))=\chi_{(n-2k,1^{2k}),\varnothing}+ \ds\sum_{i=0}^{k-1} \chi_{(n-2i-1, 1^{2i+1}),\varnothing}.$$
\end{proof}

\section{The variety $\textnormal{var}^{gr}(\mathcal{G}^{gr})$}

Let $\mathcal{G}^{gr}$ be the Grassmann algebra with the canonical grading $(\mathcal{G}_{0},\mathcal{G}_{1})$. In the context of polynomial identities, its $T_2$-ideal, graded codimensions, and graded cocharacter decomposition are already known: we have $\Idg(\mathcal{G}^{gr})=\langle [y_1,y_2],z_1\circ z_2, [y_1,z_1] \rangle_{T_2}$, $c_n^{gr}(\mathcal{G}^{gr})=2^{n}$ and $\chi_n^{gr}(\mathcal{G}^{gr})=\ds\sum_{\substack{ 1\leq i \leq n}} \chi_{(n-i),(1^{i})}$ (see \cite{GiaMiZai}). On the other hand, in the context of central polynomials, only its $T_2$-space has been determined: $C^{gr}(\mathcal{G}^{gr})=\langle y_1, \Idg(\mathcal{G}^{gr})\rangle^{T_2}$ (see \cite[Theorem 4.7]{GuiFidKo}). Our contribution will be to compute the central and proper central graded codimension and to present the decomposition of the $n$th central and proper central graded cocharacters.

First, we are concerned with the minimal subvarieties of $\textnormal{var}^{gr}(\mathcal{G}^{gr})$. Denote by $\mathcal{G}_{t}^{gr}$ the algebra $\mathcal{G}_{t}$ with the grading induced by $\mathcal{G}^{gr}$. By \cite[Corollary 7.2]{LaMatalmostGgr} we have that $A$ is a superalgebra that generates a minimal variety in $\text{var}^{gr}(\mathcal{G}^{gr})$ if and only if $\Idg(A) = \Idg(\mathcal{G}_{t}^{gr})$ , for some  $t \geq 2$. We have the following result

\begin{teo}\label{codgkgr}(\cite[Theorem 7.1]{LaMatalmostGgr} and \cite[Theorem 7.3]{NaSaVi}) Let $t \geq 2$, then

\begin{enumerate}
\item $\Idg(\mathcal{G}_{t}^{gr}) = \langle [y_{1},y_{2}],[y,z],z_{1} \circ z_{2},z_{1} \cdots z_{t+1} \rangle_{T_{2}}$;

\item $c_{n}^{gr}(\mathcal{G}_{t}^{gr}) = \ds\sum_{i=0}^{t} \binom{n}{i}$;

\item $\chi^{gr}_{n}(\mathcal{G}_{t}^{gr}) = \ds\sum_{i=0}^{t} \chi_{(n-i),(1^{i})}$.

\end{enumerate}
    
\end{teo}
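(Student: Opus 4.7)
The plan is to prove the three assertions in turn: first verify the proposed generators are graded identities, then reduce multilinear polynomials modulo the candidate $T_2$-ideal to a normal form, then prove linear independence of the normal-form monomials on $\mathcal{G}_t^{gr}$ via evaluations, and finally read off the cocharacter from the symmetry of the resulting spaces.

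To show each polynomial on the right lies in $\Idg(\mathcal{G}_t^{gr})$: the identities $[y_1,y_2]$ and $[y_1,z_1]$ are immediate, since $\mathcal{G}_t^{(0)}$ is central in $\mathcal{G}_t$. The identity $z_1\circ z_2=z_1z_2+z_2z_1$ reflects the anti-commutativity of odd Grassmann elements. Finally, for any graded substitution $z_j\mapsto b_j\in\mathcal{G}_t^{(1)}$, the product $b_1\cdots b_{t+1}$ expands as a sum of monomials in $e_1,\ldots,e_t$ of total length at least $t+1$; by the pigeonhole principle some $e_i$ repeats, and $e_i^2=0$ forces the product to vanish. Writing $I$ for the $T_2$-ideal generated by these four polynomials, we obtain $I\subseteq\Idg(\mathcal{G}_t^{gr})$.

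Next I would show that $P_n^{gr}/(P_n^{gr}\cap I)$ is spanned by the monomials
$$m_J := y_{i_1}\cdots y_{i_{n-r}}\,z_{j_1}\cdots z_{j_r},\qquad J=\{j_1<\cdots<j_r\}\subseteq\{1,\ldots,n\},\ r\le t,$$
with $\{i_1<\cdots<i_{n-r}\}$ the complement of $J$: indeed, $[y_1,y_2]$ sorts the $y$'s, $[y_1,z_1]$ pushes them to the left, $z_1\circ z_2$ skew-sorts the $z$'s (up to sign), and $z_1\cdots z_{t+1}$ caps the $z$-length at $t$. Counting gives $c_n^{gr}(\mathcal{G}_t^{gr})\le\sum_{r=0}^t\binom{n}{r}$. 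For the reverse inequality, suppose $\sum_J c_J\,m_J\in\Idg(\mathcal{G}_t^{gr})$; I select $J_0$ of minimum cardinality $r$ with $c_{J_0}\neq 0$ and apply the evaluation $z_{j_s}\mapsto e_s$ for $j_s\in J_0$, $z_k\mapsto 0$ for $k\notin J_0$, and $y_i\mapsto 1$. Any $m_{J'}$ with $J'\not\subseteq J_0$ evaluates to zero (some $z$-variable is killed), and by minimality of $|J_0|$ no proper subset $J'\subsetneq J_0$ contributes either, so the evaluation returns $c_{J_0}\,e_1e_2\cdots e_r\neq 0$, a contradiction. Hence the $m_J$ are linearly independent modulo $\Idg(\mathcal{G}_t^{gr})$, which simultaneously yields $c_n^{gr}(\mathcal{G}_t^{gr})=\sum_{i=0}^t\binom{n}{i}$ and the equality $\Idg(\mathcal{G}_t^{gr})=I$.

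For the cocharacter, I decompose by degree in $z$: for each $r$, the space $P_{n-r,r}^{gr}(\mathcal{G}_t^{gr})$ is totally symmetric in the $y$-variables (since $[y_1,y_2]$ is an identity) and totally skew in the $z$-variables (since $z_1\circ z_2$ is), so $\chi_{n-r,r}^{gr}(\mathcal{G}_t^{gr})=m_r\,\chi_{(n-r)}\otimes\chi_{(1^r)}$ with $m_r\in\{0,1\}$. The codimension equality already established forces $m_r=1$ for $0\le r\le t$ and $m_r=0$ otherwise, giving $\chi_n^{gr}(\mathcal{G}_t^{gr})=\sum_{i=0}^t\chi_{(n-i),(1^i)}$. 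The main technical obstacle is the linear-independence step: one must separate all $\sum_{r=0}^t\binom{n}{r}$ candidate monomials simultaneously, and this is precisely where the Grassmann structure (linear independence of products $e_{i_1}\cdots e_{i_k}$ with distinct indices) combined with a sweep through the cardinality-poset of index subsets does the work.
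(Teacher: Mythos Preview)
The paper does not prove this theorem; it simply quotes it from \cite{LaMatalmostGgr} and \cite{NaSaVi}. Your argument, by contrast, is a correct and self-contained proof: the verification that the four generators lie in $\Idg(\mathcal{G}_t^{gr})$ is accurate (in particular the pigeonhole reasoning for $z_1\cdots z_{t+1}$), the reduction of multilinear polynomials modulo $I$ to the normal forms $m_J$ is standard and valid, and the linear-independence step via the evaluation $z_{j_s}\mapsto e_s$, $z_k\mapsto 0$, $y_i\mapsto 1$ together with the minimality argument on $|J_0|$ correctly isolates the coefficient $c_{J_0}$. The cocharacter computation is also right: the identities $[y_1,y_2]$ and $z_1\circ z_2$ force $P_{n-r,r}^{gr}(\mathcal{G}_t^{gr})$ to carry at most a single copy of $\chi_{(n-r)}\otimes\chi_{(1^r)}$, and the codimension count fixes the multiplicity.

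One cosmetic remark: the phrase ``totally skew in the $z$-variables (since $z_1\circ z_2$ is)'' reads oddly, because $z_1\circ z_2$ itself is \emph{symmetric}; what you mean is that the \emph{identity} $z_1\circ z_2\equiv 0$ forces skew-symmetry of the quotient in the $z$'s. The mathematics is fine; only the wording could be sharpened.
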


Recall that the center of $\mathcal{G}$ is equal to $\mathcal{G}_{0}$. On the other hand, the center of the superalgebra $\mathcal{G}_{t}$ is either $\mathcal{G}_{0}$ or $\mathcal{G}_{0}\cup \spn\{e_1 e_2 \cdots e_t\}$, depending on whether $t$ is even or odd, respectively. Therefore, in the study of the $T_{2}$-space, the central graded codimensions and the central graded cocharacters of $\mathcal{G}_{t}^{gr}$, we must take into account the parity of $t$.

\begin{teo} Let $t \geq 2$,
$$C^{gr}(\mathcal{G}_t^{gr})=\langle y_1, z_1z_2\cdots z_t, \id^{gr}(\mathcal{G}_t^{gr})\rangle^{T_2}.$$

\end{teo}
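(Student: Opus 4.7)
The plan is to prove the two inclusions separately. For $\supseteq$, I would verify each generator belongs to $C^{gr}(\mathcal{G}_t^{gr})$. The polynomial $y_1$ is central because every even element of $\mathcal{G}_t$ lies in $Z(\mathcal{G}_t)$. For $z_1 z_2 \cdots z_t$, any evaluation by odd elements $a_i \in \mathcal{G}_{t,1}$ produces a linear combination of products $e_{j_1}\cdots e_{j_k}$ of total length at least $t$; a pigeonhole argument shows that any product of length strictly greater than $t$ must repeat some generator and therefore vanishes, while the length-$t$ products use each $e_j$ exactly once and so are scalar multiples of $e_1 e_2\cdots e_t$. Since $e_1\cdots e_t$ annihilates every $e_i$ on both sides, it is central in $\mathcal{G}_t$ for every $t\geq 2$. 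Together with $\id^{gr}(\mathcal{G}_t^{gr}) \subseteq C^{gr}(\mathcal{G}_t^{gr})$, this yields one direction.

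For $\subseteq$, since $\mathrm{char}\,F=0$ it suffices to show that every multilinear central polynomial $f$ in variables $y_1,\ldots,y_a, z_1,\ldots,z_b$ lies in the right-hand side. Using $[y_1,y_2]$, $[y,z]$, $z_1\circ z_2$ and $z_1\cdots z_{t+1}$ (which generate $\id^{gr}(\mathcal{G}_t^{gr})$ by Theorem \ref{codgkgr}), the $y$'s commute with everything, the $z$'s anticommute among themselves, and any product of more than $t$ of the $z$'s vanishes. Thus, modulo $\id^{gr}(\mathcal{G}_t^{gr})$, I may write $f \equiv \alpha\, y_1 y_2\cdots y_a z_1 z_2\cdots z_b$ with $b\leq t$. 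The argument then splits on the parity of $b$.

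If $b$ is even, then $y_1\cdots y_a z_1\cdots z_b$ is an even polynomial, and substituting it for $y_1$ in the generator $y_1$ places $f$ in $\langle y_1\rangle^{T_2}$ modulo identities. If $b=t$ is odd, then substituting the odd polynomial $\alpha\, y_1\cdots y_a z_1$ for $z_1$, and $z_i$ for $z_i$ when $i\geq 2$, into $z_1\cdots z_t$ produces $\alpha\, y_1\cdots y_a z_1\cdots z_t$, placing $f$ in $\langle z_1\cdots z_t\rangle^{T_2}$ modulo identities. The key remaining case is $b$ odd with $b<t$: here the centrality of $f$ must force $\alpha=0$. Evaluating $y_i=1$ and $z_j=e_j$ sends $f$ to $\alpha\, e_1\cdots e_b$, and because $b+1\leq t$ one computes $[e_1\cdots e_b, e_{b+1}] = 2\, e_1\cdots e_{b+1}\neq 0$ (using that $b$ is odd), so $e_1\cdots e_b\notin Z(\mathcal{G}_t)$ and hence $\alpha=0$.

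The main obstacle is the parity bookkeeping: one has to identify precisely for which pairs $(a,b)$ a nontrivial central polynomial of normal form $y_1\cdots y_a z_1\cdots z_b$ can exist, and then match each such polynomial with the correct generator. For $t$ even the generator $z_1\cdots z_t$ is actually redundant — being an even polynomial already in $\langle y_1\rangle^{T_2}$ — but including it keeps the statement uniform in $t$; for $t$ odd it is essential in order to capture the odd central element $e_1\cdots e_t$.
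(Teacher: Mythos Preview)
Your proposal is correct and follows essentially the same approach as the paper: both directions are proved the same way, reducing a multilinear central polynomial modulo $\Idg(\mathcal{G}_t^{gr})$ to a single monomial $\alpha\, y_1\cdots y_a z_1\cdots z_b$ and then splitting into the cases $b$ even, $b=t$, and $b<t$ odd (with the evaluation $y_i=1$, $z_j=e_j$ forcing $\alpha=0$ in the last case). Your write-up is in fact slightly more detailed than the paper's---you justify explicitly why $z_1\cdots z_t$ is central and why $e_1\cdots e_b\notin Z(\mathcal{G}_t)$ when $b<t$ is odd---and your closing remark that $z_1\cdots z_t$ is redundant for even $t$ is a nice observation the paper does not make.
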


\begin{proof}
     Let $V = \langle y_{1},z_{1}z_{2}\cdots z_{t}, \Idg(\mathcal{G}_{t}^{gr}) \rangle^{T_{2}}$. Notice that $V \subseteq C^{gr}(\mathcal{G}_t^{gr})$. We now prove the reverse inclusion. Let $f \in P_{n} \cap C^{gr}(\mathcal{G}_{t}^{gr})$. By reducing $f$ modulo $\Idg(\mathcal{G}_{t}^{gr})$, there exists $\alpha \in F$ such that $$f \equiv \alpha y_{i_{1}}y_{i_{2}}\cdots y_{i_{r}}z_{j_{1}}z_{j_{2}}\cdots z_{j_{s}}  \mod \Idg(\mathcal{G}_{t}^{gr}).$$

    We analyze the following cases:

    \begin{enumerate}
\item If $s\geq t+1$, so it follows that  $f\equiv 0 \ \mod  \ \Idg(\mathcal{G}_{t}^{gr})$ and therefore $f\in \Idg(\mathcal{G}_{t}^{gr}) \subseteq V$.

\item If $s=t$, since $z_1z_2\cdots z_t \in V$, it follows that
$\alpha \underbrace{y_{i_1}\cdots y_{i_r}z_{j_1}}_{\text{ is odd }}z_{j_2}\cdots z_{j_t} \in V$ and  therefore $f\in V$.

\item If $s< t$, we have to consider the parity of $s$. If $s$ is even, 
$\alpha y_{i_1}\cdots y_{i_r}
\underbrace{z_{j_1}z_{j_2}}_{\text{ is even } }
\cdots
\underbrace{ z_{j_{s-1}} z_{j_s}}_{\text{ is even}} \in V$, then we also conclude that $f\in V$.
  If $s$ is odd, we take the evaluation
$\phi:\phi(y_{i_1})=1,   \dots ,\,  \phi(y_{i_r})=1$,
$\phi(z_{j_1})=e_1, \dots ,\,  \phi(z_{j_s})=e_s.$ Then $\phi(f)=\alpha e_1\cdots e_s \in Z(\mathcal{G}_{t})$. Then $\alpha=0$ and $f\in V$.
\end{enumerate}
Consequently, $C^{gr}(\mathcal{G}_t^{gr})=\langle y_1, z_1z_2\cdots z_t, \id^{gr}(\mathcal{G}_t^{gr})\rangle^{T_2}.$
\end{proof}

\begin{obs} The converse of Remark \ref{rie} is not true. For example, the superalgebras $\mathcal{G}_{2k}^{gr}$ and $\mathcal{G}_{2k+1}^{gr}$ do not satisfy the same polynomial identities however, they do share the same central polynomials. In other words, $C^{gr}(\mathcal{G}_{2k}^{gr}) = C^{gr}(\mathcal{G}_{2k+1}^{gr})$ but $\Id^{gr}(\mathcal{G}_{2k}^{gr}) \neq \Id^{gr}(\mathcal{G}_{2k+1}^{gr})$, for all $k\geq 1$.
    
\end{obs}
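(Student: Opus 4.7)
The plan is to use the explicit description of the $T_2$-spaces $C^{gr}(\mathcal{G}_t^{gr}) = \langle y_1, z_1\cdots z_t, \Idg(\mathcal{G}_t^{gr})\rangle^{T_2}$ just established, together with the presentation of $\Idg(\mathcal{G}_t^{gr})$ from Theorem~\ref{codgkgr}, and reduce the claim to checking that the generators of each $T_2$-space lie in the other.

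First I would handle the identity part. By Theorem~\ref{codgkgr}, $\Idg(\mathcal{G}_{2k+1}^{gr}) \subsetneq \Idg(\mathcal{G}_{2k}^{gr})$: the polynomial $z_1\cdots z_{2k+1}$ belongs to $\Idg(\mathcal{G}_{2k}^{gr})$ by its defining generators, but it is not an identity of $\mathcal{G}_{2k+1}^{gr}$ since the evaluation $z_i \mapsto e_i$ gives the nonzero element $e_1 e_2\cdots e_{2k+1}$. This settles $\Idg(\mathcal{G}_{2k}^{gr})\neq \Idg(\mathcal{G}_{2k+1}^{gr})$.

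Next I would prove the equality of the central $T_2$-spaces by showing both inclusions. The inclusion $C^{gr}(\mathcal{G}_{2k+1}^{gr}) \subseteq C^{gr}(\mathcal{G}_{2k}^{gr})$ is immediate: the generators $y_1$, $z_1\cdots z_{2k+1}$, and $\Idg(\mathcal{G}_{2k+1}^{gr})\subset \Idg(\mathcal{G}_{2k}^{gr})$ all already lie in $C^{gr}(\mathcal{G}_{2k}^{gr})$. For the reverse direction I must show that the generators of $C^{gr}(\mathcal{G}_{2k}^{gr})$ lie in $C^{gr}(\mathcal{G}_{2k+1}^{gr})$. The polynomial $y_1$ is always central since $(\mathcal{G}_{2k+1})_0\subseteq Z(\mathcal{G}_{2k+1})$, and $z_1\cdots z_{2k}$ is central because a product of an even number of odd elements is even. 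The remaining generators of $\Idg(\mathcal{G}_{2k}^{gr})$, namely $[y_1,y_2]$, $[y,z]$, $z_1\circ z_2$, already belong to $\Idg(\mathcal{G}_{2k+1}^{gr})\subseteq C^{gr}(\mathcal{G}_{2k+1}^{gr})$, so the only nontrivial point is to show that $z_1\cdots z_{2k+1}\in C^{gr}(\mathcal{G}_{2k+1}^{gr})$.

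This last step is the main obstacle, and I would handle it by a degree count in $\mathcal{G}_{2k+1}$. Writing each odd substitute $z_i$ as a sum of monomials of odd degrees $d_{i,\ell}\ge 1$ in $e_1,\dots,e_{2k+1}$, a non-zero summand of $z_1\cdots z_{2k+1}$ must use pairwise disjoint sets of generators, so $\sum_i d_{i,\ell_i} \le 2k+1$; combined with the lower bound $\sum_i d_{i,\ell_i}\ge 2k+1$ coming from having $2k+1$ factors each of odd degree at least $1$, equality forces every $d_{i,\ell_i}=1$. Hence every nonzero contribution equals $\pm e_1 e_2\cdots e_{2k+1}$, which lies in $Z(\mathcal{G}_{2k+1})$ by the description of the center recalled just before the remark. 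Therefore $z_1\cdots z_{2k+1}$ takes values in $Z(\mathcal{G}_{2k+1})$, which concludes the proof that $C^{gr}(\mathcal{G}_{2k}^{gr}) = C^{gr}(\mathcal{G}_{2k+1}^{gr})$.
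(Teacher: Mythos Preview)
Your overall plan is right, and the paper itself leaves this remark unproved as an immediate consequence of the preceding theorem. Two points, however, deserve comment.

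First, your ``main obstacle'' is no obstacle: the preceding theorem explicitly lists $z_1\cdots z_{2k+1}$ among the generators of $C^{gr}(\mathcal{G}_{2k+1}^{gr})=\langle y_1,\, z_1\cdots z_{2k+1},\, \Idg(\mathcal{G}_{2k+1}^{gr})\rangle^{T_2}$, so the degree count is unnecessary for that purpose.

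Second, there is a genuine gap in the reverse inclusion. To obtain $\Idg(\mathcal{G}_{2k}^{gr})\subseteq C^{gr}(\mathcal{G}_{2k+1}^{gr})$ you must place the \emph{whole} $T_2$-ideal inside the $T_2$-space, not merely its four $T_2$-ideal generators: $C^{gr}(\mathcal{G}_{2k+1}^{gr})$ is only a $T_2$-space (for instance $y_1$ is central but $y_1z_1$ is not), so knowing that $z_1\cdots z_{2k+1}$ is central does not automatically cover elements of the form $g\,(z_1\cdots z_{2k+1})\,h$. A clean fix works on multilinear parts: any $f\in P_{n-r,r}^{gr}\cap\Idg(\mathcal{G}_{2k}^{gr})$ reduces modulo $\Idg(\mathcal{G}_{2k+1}^{gr})$ to $\alpha\,y_1\cdots y_{n-r}z_{n-r+1}\cdots z_n$; for $r\le 2k$ an evaluation in $\mathcal{G}_{2k}$ forces $\alpha=0$, while for $r=2k+1$ this monomial lies in $\langle z_1\cdots z_{2k+1}\rangle^{T_2}\subseteq C^{gr}(\mathcal{G}_{2k+1}^{gr})$. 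Alternatively, your own degree argument can be extended one step: since every odd evaluation of $z_1\cdots z_{2k+1}$ lands in $F\,e_1\cdots e_{2k+1}$, and $e_1\cdots e_{2k+1}$ is central and annihilates the augmentation ideal, any product $a\,(e_1\cdots e_{2k+1})\,b$ is again a scalar multiple of $e_1\cdots e_{2k+1}$; hence the full $T_2$-ideal $\langle z_1\cdots z_{2k+1}\rangle_{T_2}$ consists of central polynomials of $\mathcal{G}_{2k+1}^{gr}$.
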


\begin{teo}\label{gkgr} Let $k \geq 1$, we have the following.

\begin{enumerate}

\item $\ds c^{gr,z}_n(\mathcal{G}_{2k}^{gr})= \ds c^{gr,z}_n(\mathcal{G}_{2k+1}^{gr})=\ds \sum_{\substack{ 1\leq i \leq 2k \\ i \text{ odd} } } \binom{n}{i}$;

\item $\ds \delta^{gr}_n(\mathcal{G}_{2k}^{gr})=\ds \sum_{\substack{ 0\leq i \leq 2k \\ i \text{  even} } } \binom{n}{i}$ and $\ds \delta^{gr}_n(\mathcal{G}_{2k+1}^{gr}) = \ds \binom{n}{2k+1} +\ds \delta^{gr}_n(\mathcal{G}_{2k}^{gr})$;

\item $\chi^{gr,z}_{n}(\mathcal{G}_{2k}^{gr})= \chi^{gr,z}_{n}(\mathcal{G}_{2k+1}^{gr})=
\ds\sum_{\substack{ 1\leq i \leq 2k \\ i \text{ odd } }} \chi_{(n-i),(1^{i})}.$

\item $\chi_{n}(\Delta^{gr}(\mathcal{G}_{2k}^{gr})) = \ds\sum_{\substack{ 0\leq i \leq 2k \\ i \text{ even } }} \chi_{(n-i),(1^{i})}$ and $\chi_{n}(\Delta^{gr}(\mathcal{G}_{2k+1}^{gr})) =  \chi_{(n-2k-1),(1^{2k+1})}+ \ds \sum_{\substack{ 0\leq i \leq 2k \\ i \text{ even } }} \chi_{(n-i),(1^{i})}$.
\end{enumerate}
\end{teo}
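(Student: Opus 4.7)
The plan is to identify an explicit basis of $P_n^{gr,z}(\mathcal{G}_t^{gr})$ for $t \in \{2k, 2k+1\}$ and then pass from the resulting codimension count to the cocharacter data via (\ref{eqcodim}) and (\ref{eqcocar}), following the strategy used for $\mathcal{G}_{2k}$. Using Theorem \ref{codgkgr} together with the identities $[y_i, y_j]$, $[y_i, z_j]$ and $z_1 \circ z_2$ in $\Idg(\mathcal{G}_t^{gr})$, the sorted multilinear monomials
\[
m_J = y_{i_1}\cdots y_{i_r} z_{j_1}\cdots z_{j_s},\qquad J = \{j_1 < \cdots < j_s\}\subseteq\{1,\ldots,n\},\qquad s\leq t,
\]
form a basis of $P_n^{gr}/\Idg(\mathcal{G}_t^{gr})$. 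I will take $\mathcal{B}$ to be the subfamily with $|J|$ odd and $|J|\leq 2k-1$ and prove that $\mathcal{B}$ is a basis of $P_n^{gr,z}(\mathcal{G}_t^{gr})$.

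For the spanning step I would rely on the description $C^{gr}(\mathcal{G}_t^{gr}) = \langle y_1, z_1 z_2\cdots z_t, \Idg(\mathcal{G}_t^{gr})\rangle^{T_2}$ just established. Every $m_J$ with $|J|$ even has grade zero, so substituting $y_1\mapsto m_J$ places $m_J$ in $\langle y_1\rangle^{T_2}\subseteq C^{gr}(\mathcal{G}_t^{gr})$. When $t=2k+1$ the remaining non-$\mathcal{B}$ monomials are the $m_J$ with $|J|=t$, and each such $m_J = y_{i_1}\cdots y_{i_r} z_{j_1}\cdots z_{j_t}$ is obtained from $z_1 z_2\cdots z_t$ by substituting $z_1\mapsto y_{i_1}\cdots y_{i_r} z_{j_1}$ (which has odd $z$-degree) and $z_k\mapsto z_{j_k}$ for $k\geq 2$, placing it in $\langle z_1\cdots z_t\rangle^{T_2}$. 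For linear independence, suppose $f=\sum_J \alpha_J m_J \in C^{gr}(\mathcal{G}_t^{gr})$ with $J$ ranging over the index set of $\mathcal{B}$, and pick $J^\ast$ of minimal size $s^\ast$ with $\alpha_{J^\ast}\neq 0$. Under the evaluation
\[
\phi(y_i)=\begin{cases} 1, & i\notin J^\ast,\\ 0, & i\in J^\ast,\end{cases}\qquad
\phi(z_j)=\begin{cases} e_{p(j)}, & j\in J^\ast,\\ 0, & j\notin J^\ast,\end{cases}
\]
where $p(j)$ denotes the position of $j$ in the sorted listing of $J^\ast$, every $m_J$ with $J\neq J^\ast$ is killed (a factor $y_i$ with $i\in J^\ast\setminus J$ evaluates to $0$ whenever $J\not\supseteq J^\ast$, and a factor $z_j$ with $j\in J\setminus J^\ast$ evaluates to $0$ whenever $J\not\subseteq J^\ast$), while monomials with $|J|<s^\ast$ have $\alpha_J=0$ by minimality. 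Hence $\phi(f)=\alpha_{J^\ast}\,e_1 e_2\cdots e_{s^\ast}$, and since $s^\ast$ is odd with $s^\ast\leq 2k-1<t$, this element lies in $\mathcal{G}_1$ but outside $F(e_1\cdots e_t)$, hence outside $Z(\mathcal{G}_t)$. This forces $\alpha_{J^\ast}=0$, a contradiction.

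Counting $|\mathcal{B}|=\sum_{i\text{ odd},\,1\leq i\leq 2k-1}\binom{n}{i}$ yields item (1), and item (2) follows from (\ref{eqcodim}) combined with the graded codimensions from Theorem \ref{codgkgr}. For the cocharacter decompositions, Theorem \ref{codgkgr} gives $\chi_n^{gr}(\mathcal{G}_t^{gr})=\sum_{i=0}^t \chi_{(n-i),(1^i)}$ with each irreducible appearing with multiplicity $1$, so by (\ref{eqcocar}) both $\chi_n^{gr,z}(\mathcal{G}_t^{gr})$ and $\chi_n(\Delta^{gr}(\mathcal{G}_t^{gr}))$ are disjoint sub-sums of this decomposition. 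Since $d_{(n-i),(1^i)}=\binom{n}{i}$ and item (1) pins down exactly which binomial coefficients sum to $c_n^{gr,z}(\mathcal{G}_t^{gr})$, the multiplicities are forced: $\chi_n^{gr,z}$ collects the $\chi_{(n-i),(1^i)}$ for odd $i\in\{1,\ldots,2k-1\}$ and $\chi_n(\Delta^{gr})$ collects the rest, producing items (3) and (4). The main technical obstacle I anticipate is the spanning step in the case $t=2k+1$, where the substitution into $z_1 z_2\cdots z_t$ used to absorb the $|J|=t$ monomials must be chosen so as to respect the grading while accommodating an arbitrary block of $y$-variables in front of the first $z$.
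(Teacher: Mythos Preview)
Your proposal is correct and takes essentially the same approach as the paper. The paper organizes the codimension computation slightly more economically by working in the spaces $P_{n-r,r}^{gr}$ via equation~(\ref{codimenr})---since each $P_{n-r,r}^{gr}(\mathcal{G}_t^{gr})$ is one-dimensional, one only needs to check whether the single sorted monomial $y_1\cdots y_{n-r}z_{n-r+1}\cdots z_n$ is central (yes iff $r$ is even or $r\geq t$), which bypasses your explicit linear-independence evaluation; the spanning argument using the generators $y_1$ and $z_1\cdots z_t$ of $C^{gr}(\mathcal{G}_t^{gr})$ and the cocharacter step (multiplicities in $\chi_n^{gr}$ are all $1$, so matching $d_{(n-i),(1^i)}=\binom{n}{i}$ against the codimension formula forces the decomposition) are identical to yours.
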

\begin{proof} Let $f \in P_{n-r,r}$. We have $f \equiv \alpha y_{1} \cdots y_{n-r}z_{n-r+1}\cdots z_{n} \mod C^{gr}(\mathcal{G}_{t}^{gr})$, for some $\alpha \in F$. If $ r \geq t$, then $f \equiv 0 \mod C^{gr}(\mathcal{G}_{t}^{gr})$ and we have $c_{n-r,r}^{gr}(\mathcal{G}_{t}^{gr}) = 0$. Now assume $r < t$.
\begin{itemize}
    \item If $r$ is even, then the monomial $y_{1}\cdots y_{n-r}z_{n-r+1}\cdots z_{n}$ is central, so $f \in C^{gr}(\mathcal{G}_{t}^{gr})$ and $c^{gr,z}_{n-r,r}(\mathcal{G}_{t}^{gr})=0$,
    \item If $r$ is odd, then the monomial $y_{1}\cdots y_{n-r}z_{n-r+1}\cdots z_{n}$ is not central, then $c^{gr,z}_{n-r,r}(\mathcal{G}_t^{gr})=1$.
\end{itemize}


Since $c_{n}^{gr,z}(\mathcal{G}_t^{gr}) = \ds\sum_{r=0}^{n} \binom{n}{r} c^{gr,z}_{n-r,r}(\mathcal{G}_{t}^{gr})$, by equation (\ref{codimenr}), we have for $k\ge 1$
$$\ds c^{gr,z}_n(\mathcal{G}_{2k}^{gr})= \ds c^{gr,z}_n(\mathcal{G}_{2k+1}^{gr})=\ds \sum_{\substack{ 1\leq i \leq 2k \\ i \text{ odd} } } \binom{n}{i}.$$

Consequently, using Theorem \ref{codgkgr} and the equation (\ref{eqcodim}), the proper central codimensions is
$$\ds \delta^{gr}_n(\mathcal{G}_{2k}^{gr})=\ds \sum_{\substack{ 01\leq i \leq 2k \\ i \text{  even} } } \binom{n}{i} \  \ \mbox{and} \ \  \ds \delta^{gr}_n(\mathcal{G}_{2k+1}^{gr}) = \ds \binom{n}{2k+1} +\ds \delta^{gr}_n(\mathcal{G}_{2k}^{gr}).$$

Now we determine the decomposition of the $n$th central graded and the $n$th  proper central graded cocha\-racter. From Theorem \ref{codgkgr}, $\chi_{n}^{gr}(\mathcal{G}_{t}^{gr}) = \ds\sum_{i=0}^{t} \chi_{(n-i),(1^i)}$ is the decomposition of $n$th graded cocharacter of $\mathcal{G}_{t}^{gr}$. By equation (\ref{eqcocar}), we have $\chi_{n}^{gr,z}(\mathcal{G}_{t}^{gr}) = \ds\sum_{i=0}^{t} \alpha_i\chi_{(n-i),(1^{i})},$ where  $\alpha_0, \alpha_1, \dots, \alpha_{t}\in \{0,1\}$. We have just verified that for $k \geq 1$, $c_n^z (\mathcal{G}_{2k}^{gr})= c_n^z (\mathcal{G}_{2k+1}^{gr})$. We now proceed to decompose the $n$th central graded cocharacter in the even case, observing that the remaining case can be treated analogously. We have that $c_n^z (\mathcal{G}_{2k}^{gr})= \chi_n^z (\mathcal{G}_{2k}^{gr})(1)$ and so
$$
c_n^z (\mathcal{G}_{2k}^{gr})= \ds\sum_{i=0}^{2k} \alpha_id_{(n-i)(1^{i})} = \ds\sum_{i=0}^{2k} \alpha_i\binom{n}{i}.$$
Since
$c^z_n(\mathcal{G}_{2k}^{gr})=
\ds\binom{n}{1}+ \binom{n}{3}+\dots+\binom{n}{2k-1}$, we obtain
$\alpha_1=\alpha_3=\dots=\alpha_{2k-1}=1$ and $\alpha_0=\alpha_2=\dots=\alpha_{2k}=0$. Thus, we conclude that
$$\chi^{gr,z}_{n}(\mathcal{G}_{2k}^{gr})= \chi^{gr,z}_{n}(\mathcal{G}_{2k+1}^{gr})=
\ds\sum_{\substack{ 1\leq i \leq 2k \\ i \text{ odd } }} \chi_{(n-i),(1^{i})}.$$
As a consequence, using equation (\ref{eqcocar}) and Theorem \ref{codgkgr}, we conclude for $k\geq 1$
$$\chi_{n} (\Delta^{gr}(\mathcal{G}_{2k}^{gr})) = \ds\sum_{\substack{ 0\leq i \leq 2k \\ i \text{ even } }} \chi_{(n-i),(1^{i})} \ \  \mbox{and} \ \  \chi_{n} (\Delta^{gr}(\mathcal{G}_{2k+1}^{gr})) =  \chi_{(n-2k-1),(1^{2k+1})}+ \ds \sum_{\substack{ 0\leq i \leq 2k \\ i \text{ even } }} \chi_{(n-i),(1^{i})}.$$

\end{proof}

Now we have all the necessary ingredients to establish the decomposition of the $n$th graded central cocharacter of the infinite dimensional Grassmann algebra endowed with the canonical grading.

\begin{teo} For the superalgebra $\mathcal{G}^{gr}$, we have:
\begin{enumerate}
 \item $\ds c^{gr,z}_n(\mathcal{G}^{gr})= \ds \sum_{\substack{ 1 \leq i \leq n \\ i \text{ odd} } } \binom{n}{i} = 2^{n-1}$ and $\ds \delta^{gr}_n(\mathcal{G}^{gr})=  \ds \sum_{\substack{ 0 \leq i \leq n \\ i \text{ even} } } \binom{n}{i}= 2^{n-1}$;

\item $\chi^{gr,z}_{n}(\mathcal{G}^{gr})= \ds\sum_{\substack{ 1\leq i \leq n \\ i \text{ odd } }} \chi_{(n-i),(1^{i})}$ and  $\chi_{n} (\Delta^{gr}(\mathcal{G}^{gr})) = \ds\sum_{\substack{ 0\leq i \leq n \\ i \text{ even } }} \chi_{(n-i),(1^{i})}$
\end{enumerate}
    
\end{teo}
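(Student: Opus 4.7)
The plan is to mirror the argument used in Theorem \ref{gkgr} for $\mathcal{G}_{2k}^{gr}$ and $\mathcal{G}_{2k+1}^{gr}$, exploiting the $(S_{n-r}\times S_r)$-decomposition afforded by equation (\ref{codimenr}). The key observation is that $Z(\mathcal{G})=\mathcal{G}_0$, so centrality of a standard monomial is governed entirely by the parity of the number of $z$-variables.

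First, I would fix $0\leq r\leq n$ and study $P_{n-r,r}^{gr}(\mathcal{G}^{gr})$. Using the identities $[y_1,y_2],\ [y_1,z_1],\ z_1\circ z_2\in\Id^{gr}(\mathcal{G}^{gr})$, every multilinear polynomial of type $(n-r,r)$ reduces, modulo $\Id^{gr}(\mathcal{G}^{gr})$, to a scalar multiple of the single monomial $m_{n,r}:=y_1\cdots y_{n-r}z_{n-r+1}\cdots z_n$. Since $m_{n,r}$ does not vanish on $\mathcal{G}^{gr}$, this shows $c_{n-r,r}^{gr}(\mathcal{G}^{gr})=1$ for all $r$, which via (\ref{codimenr}) gives back $c_n^{gr}(\mathcal{G}^{gr})=2^n$.

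Next I would determine when $m_{n,r}$ is central. Evaluating $y$'s in $\mathcal{G}_0$ and $z$'s in $\mathcal{G}_1$, the product lies in $\mathcal{G}_0=Z(\mathcal{G})$ if $r$ is even. If $r$ is odd, the substitution $y_i\mapsto 1$, $z_{n-r+j}\mapsto e_j$ produces $e_1\cdots e_r\in\mathcal{G}_1\setminus Z(\mathcal{G})$, so $m_{n,r}$ is not central. Hence $c_{n-r,r}^{gr,z}(\mathcal{G}^{gr})=1$ if $r$ is odd and $0$ if $r$ is even. Summing via (\ref{codimenr}) gives
\[
c_n^{gr,z}(\mathcal{G}^{gr})=\sum_{\substack{0\leq r\leq n\\ r\text{ odd}}}\binom{n}{r}=2^{n-1},
\]
and (\ref{eqcodim}) yields $\delta_n^{gr}(\mathcal{G}^{gr})=2^n-2^{n-1}=2^{n-1}=\sum_{r\text{ even}}\binom{n}{r}$.

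For the cocharacter decomposition I would argue as follows. Since $P_n^{gr,z}(\mathcal{G}^{gr})$ is a $\mathbb{Z}_2\wr S_n$-quotient of $P_n^{gr}(\mathcal{G}^{gr})$, and $\chi_n^{gr}(\mathcal{G}^{gr})=\sum_{i=0}^{n}\chi_{(n-i),(1^i)}$ has each multiplicity equal to $1$, the expansion $\chi_n^{gr,z}(\mathcal{G}^{gr})=\sum_i\alpha_i\chi_{(n-i),(1^i)}$ has $\alpha_i\in\{0,1\}$. For $r$ odd, the $1$-dimensional $(S_{n-r}\times S_r)$-module $P_{n-r,r}^{gr,z}(\mathcal{G}^{gr})$ is spanned by the class of $m_{n,r}$, which is symmetric in the $y$'s and skew-symmetric in the $z$'s, hence isomorphic to $M_{(n-r)}\otimes M_{(1^r)}$; this contributes exactly $\chi_{(n-r),(1^r)}$ to $\chi_n^{gr,z}(\mathcal{G}^{gr})$. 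Therefore $\chi_n^{gr,z}(\mathcal{G}^{gr})=\sum_{r\text{ odd}}\chi_{(n-r),(1^r)}$, and (\ref{eqcocar}) gives $\chi_n(\Delta^{gr}(\mathcal{G}^{gr}))=\sum_{r\text{ even}}\chi_{(n-r),(1^r)}$. I expect no serious obstacle: the entire argument is a direct adaptation of the finite-dimensional case, with the parity-of-$r$ criterion being the only discriminating ingredient.
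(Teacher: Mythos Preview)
Your proposal is correct. For part~(1) you follow essentially the same route as the paper: reduce modulo $\Id^{gr}(\mathcal{G}^{gr})$ to the single monomial $m_{n,r}$, decide centrality by the parity of $r$, and sum via~(\ref{codimenr}).

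For part~(2) there is a genuine, if minor, difference in strategy. The paper obtains $\alpha_i=1$ for odd $i$ indirectly, by invoking Theorem~\ref{gkgr} together with the inequality $\chi_n^{gr,z}(\mathcal{G}_{2k}^{gr})\le\chi_n^{gr,z}(\mathcal{G}^{gr})$ for all $k$, and then forces $\alpha_j=0$ for even $j$ by the dimension count $c_n^{gr,z}(\mathcal{G}^{gr})=2^{n-1}=\sum_{i\text{ odd}}d_{(n-i),(1^i)}$. You instead read off the $(S_{n-r}\times S_r)$-character of $P_{n-r,r}^{gr,z}(\mathcal{G}^{gr})$ directly from the symmetry/skew-symmetry of $m_{n,r}$ and use the standard identification of multiplicities between the $\mathbb{Z}_2\wr S_n$-cocharacter and the $(S_{n-r}\times S_r)$-characters. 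Your route is more self-contained (it does not rely on the finite-dimensional case already treated in Theorem~\ref{gkgr}); the paper's route ties the computation to the rest of the section. If you keep your version, it would be worth stating explicitly the fact you are using---that the multiplicity of $\chi_{(\lambda),(\mu)}$ in $\chi_n^{gr,z}(A)$ equals the multiplicity of $\chi_\lambda\otimes\chi_\mu$ in $\chi_{n-r,r}^{gr,z}(A)$---since the paper introduces the $(S_{n-r}\times S_r)$-characters but does not spell out this correspondence.
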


\begin{proof}
Consider $f\in P_{n-r,r}^{gr}(\mathcal{G}^{gr})$, so $f$ is generated by $y_1y_2\cdots y_{n-r}z_{n-r+1}z_{n-r+2}\cdots z_{n}$ mod $\id^{gr}(\mathcal{G}^{gr})$. Since $y\in C^{gr}(\mathcal{G}^{gr})$, we have that either $f$ is generated by  $y_1y_2\cdots y_{n-r}z_{n-r+1}z_{n-r+2}\cdots z_{n}$ mod $C^{gr}(\mathcal{G}^{gr})$, if $r$ is odd, or $f\in C^{gr}(\mathcal{G}^{gr})$, if $r$ is even. Then if $r$ is odd, we get $c_{n-r,r}^{gr,z}(\mathcal{G}^{gr}) = 1$, and if $r$ is even, we have $c_{n-r,r}^{gr,z}(\mathcal{G}^{gr}) = 0$. Hence $\ds c^{gr,z}_n(\mathcal{G}^{gr})= \ds \sum_{\substack{ 1 \leq i \leq n \\ i \text{ odd} } } \binom{n}{i}$ and, consequently, $\ds \delta^{gr}_n(\mathcal{G}^{gr})=  \ds \sum_{\substack{ 0 \leq i \leq n \\ i \text{ even} } } \binom{n}{i}$.

Our next step is to analyze the decomposition of the $n$th graded central cocharacter of $\mathcal{G}^{gr}$. Since $\chi^{gr}_n (\mathcal{G}^{gr}) = \ds\sum_{i=0}^{n}\chi_{(n-i),(1^{i})}$ 
we can establish that $\chi^{gr,z}_n (\mathcal{G}^{gr}) = \ds\sum_{i=0}^{n}\alpha_i\chi_{(n-i),(1^{i})}$, $\alpha_{i} \in \{0,1\}$, by comparing the graded cocharacter to the central graded cocharacter.  
Also we have $\chi^{gr,z}_n (\mathcal{G}^{gr}_{2k}) \leq \chi^{gr,z}_n (\mathcal{G}^{gr})$ for all $k\geq 0$ and  by Theorem \ref{gkgr}, we get $\chi^{gr,z}_n (\mathcal{G}^{gr}_{2k})= \ds\sum_{\substack{ 1\leq i \leq 2k \\ i \text{ odd} }} \chi_{(n-i),(1^{i})}$.
So we obtain $\alpha_{i} = 1$, for all $i$ odd and $1 \leq i \leq n$, and  we can write
$$\chi^{gr,z}_n (\mathcal{G}^{gr})=
\ds\sum_{\substack{ 1\leq i \leq n \\ i \text{ odd} }}  \chi_{(n-i),(1^i)}+
\ds\sum_{\substack{ 0\leq j \leq n \\ j \text{ even} }} \alpha_j \chi_{(n-j),(1^j)}, \ \textnormal{with} \ \alpha_{j} \in \{0,1\}.$$ 

Since $c_n^{gr,z} (\mathcal{G}^{gr})= \chi_n^{gr,z} (\mathcal{G}^{gr})(1) = 2^{n-1}$ and $\ds\sum_{\substack{ 1\leq i \leq n \\ i \text{ odd} }}  d_{(n-i),(i)}=2^{n-1}$, we obtain $\alpha_{j} = 0$ for all $j$ even and $0 \leq j \leq n$. Hence, we conclude that $\chi^{gr,z}_n (\mathcal{G}^{gr})=
\ds\sum_{\substack{ 1\leq i \leq n \\ i \text{ odd} }} \chi_{(n-i),(1^{i})}$ and $\chi_{n} (\Delta^{gr}(\mathcal{G}^{gr})) = \ds\sum_{\substack{ 0\leq i \leq n \\ i \text{ even } }} \chi_{(n-i),(1^{i})}.$ 

\end{proof}

\end{document}